\newtheorem{theorem}{Theorem}[section]
\newtheorem{lemma}[theorem]{Lemma}
\newtheorem{proposition}[theorem]{Proposition}
\newtheorem*{theorem*}{Theorem}
\newtheorem*{proposition*}{Proposition}
\newtheorem*{conjecture*}{Conjecture}
\newtheorem*{disk_orbifolds_thm}{Theorem~\ref{thm:disk_orbifolds}}
\newtheorem*{genus_orbifolds_thm}{Theorem~\ref{thm:genus_orbifolds}}
\theoremstyle{definition}
\theoremstyle{remark}
\newtheorem{remark}[theorem]{Remark}
\newtheorem{example}[theorem]{Example}
\def\Z{\mathbb Z}
\def\R{\mathbb R}
\def\H{\mathbb H}
\def\N{\mathbb N}
\def\SS{\Sigma}
\def\scl{\ensuremath{\textnormal{scl}}}
\def\PSL{\textnormal{PSL}}
\def\hs1t{\ensuremath{\textnormal{Homeo}^+(S^1)^\sim}}
\begin{document}

\title{Stable immersions in orbifolds}

\author{Alden Walker}
\address{Department of Mathematics \\ University of Chicago \\
Chicago, IL  60637}
\email{akwalker@math.uchicago.edu}

\begin{abstract}
We prove that in any hyperbolic orbifold with one boundary component, 
the product of any hyperbolic fundamental group element with a sufficiently 
large multiple of the boundary is represented by a geodesic loop 
that virtually bounds an immersed surface.  In the case that 
the orbifold is a disk, there are some conditions.  
Our results generalize work of Calegari-Louwsma and resolve a 
conjecture of Calegari.
\end{abstract}

\maketitle


\section{Introduction}

It is an interesting and important problem to understand 
which curves on a surface bound an immersed subsurface.  
This paper addresses a question in this area primarily motivated by 
stable commutator length ($\scl$) and quasimorphisms, and in this 
introduction, we provide some background.  However, the 
main theorems are concerned only with immersions, so the 
reader can safely skim the portions of this introduction concerned 
with $\scl$  
and retain a logically complete (though less colorful!) picture. 
For a more thorough $\scl$ background, especially as it relates 
to quasimorphisms and immersions, see~\cite{C_faces} and~\cite{CL}.

\subsection{Orbifolds}

Recall that an \emph{orbifold} is a 
space locally modeled on Euclidean space modulo finite groups of 
isometries.  See \cite{Thurston} for background.
In this paper, we will be concerned only with good orbifolds 
with a hyperbolic structure.  
By a \emph{hyperbolic orbifold} $\Sigma$, we mean an orientable orbifold
which arises as the quotient of hyperbolic space $\H^2$ by a finitely generated 
discrete subgroup $\Gamma \subseteq \PSL(2,\R)$ 
such that $\Gamma$ acts properly on $\H^2$ and $\Gamma \backslash \H^2$ is 
finite-volume.  Thus, $\H^2$ is the universal 
orbifold cover of $\Sigma$, and $\Gamma$ is identified with $\pi_1(\Sigma)$.
We will use this notation throughout the paper.

Geometrically, a hyperbolic orbifold is a hyperbolic 
surface with finitely many cone points and cusps.  We will 
be interested in how the hyperbolic structure can inform 
topological properties of $\Sigma$, so it is useful to 
also have a topological picture.  Topologically, a
hyperbolic orbifold is an orientable surface
with finitely many points with a nontrivial structure (isotropy) group 
(which is always a finite cyclic group), and finitely many points removed.
The \emph{underlying space} of an orbifold $\Sigma$ 
is the topological space $\Gamma\backslash \H^2$ with the orbifold 
structure forgotten.  A \emph{disk orbifold} is a hyperbolic orbifold 
whose underlying space is a disk; that is, $\Sigma$ is topologically a 
sphere with cone points and a single removed point.  There are various 
notations for orbifolds, which we will mostly avoid; however, 
following \cite{CL}, we will refer to disk orbifolds 
with two cone points of orders $p$ and $q$ (and one cusp) as $(p,q,\infty)$ 
orbifolds.  We clarify that this notation does not refer to a triangle 
group, which contains an orientation-reversing reflection.
As an example, if we set $\Gamma = \PSL(2,\Z)$, we get 
the modular orbifold, which is a $(2,3,\infty)$ orbifold.

While a hyperbolic orbifold technically has cusps instead of boundary, 
it still has natural boundary elements of the fundamental group, as follows.
A small loop around a cusp gives a conjugacy class
in the fundamental group $\pi_1(\Sigma)$.  In the identification $\Gamma = \pi_1(\Sigma)$, 
this class is identified with the (parabolic) stabilizers of the preimages
of the cusp in $\H^2$.  Abusing notation, we will use $\partial \Sigma$ 
to mean either the union of the small loops around the cusps of $\Sigma$ 
or the union of the associated conjugacy classes in $\Gamma$, 
and we will refer to the loops as \emph{boundary components}, or boundary loops, of $\SS$.
As noted above, topologically, a hyperbolic orbifold is homotopy 
equivalent to an orbifold in which the cusps have been replaced 
with honest boundaries, motivating this nomenclature.

\begin{remark}
Just as in \cite{CL}, the results and proofs in this paper apply equally 
well to hyperbolic orbifolds with geodesic boundaries instead of cusps, 
in which case the universal orbifold cover is not the entire 
hyperbolic plane.  For simplicity, however, we will always 
use the definition of hyperbolic orbifold above.
\end{remark}

\subsection{Immersions}

Let $S$ be a smooth surface, possibly with boundary and 
removed points, and let $\SS$ be a hyperbolic orbifold.  
If $f:S \to \SS$ is a continuous map, then it lifts 
to a map $\widetilde{f}:\widetilde{S} \to \widetilde{\SS} = \H^2$ between 
universal covers.  We say that $f$ is an \emph{immersion}
if $\widetilde{f}$ is. Note that this is equivalent to 
saying that $f$ is an immersion away from the preimages of 
the cone points of $\SS$, and at the preimages of a cone 
point with angle $2\pi/n$, $f$ 
has branch points of order exactly $n$.
We will only be interested in orientation-preserving 
immersions, although the techniques in this paper apply to orientation-reversing 
ones as well.

If $\SS$ is a hyperbolic orbifold with 
fundamental group $\Gamma$, 
and $g \in \Gamma$ is a hyperbolic element, then $g$ is 
represented by a unique geodesic $\gamma \in \SS$.
We say that $\gamma$ (or $g$) \emph{bounds an immersed surface}
if there is an oriented surface $S$ and an orientation-preserving 
immersion $f:S \to \SS$ such that $f(\partial S) = \gamma$ 
(as oriented $1$-manifolds).  We say that 
$\gamma$ (or $g$) \emph{virtually bounds an immersed surface} 
if there is an oriented surface $S$ and an 
orientation-preserving immersion $f : S \to \SS$ such that 
$f|_{\partial S}$ is a covering map $\partial S \to \gamma$.
There are examples of curves on surfaces which do not bound an 
immersed surface but do virtually bound an immersed surface.
We emphasize that a group element $g \in \Gamma$ 
only (virtually) bounds an immersed surface if the surface 
boundary maps to the \emph{geodesic} representative of $g$.

\subsection{\texorpdfstring{$\scl$}{scl} and stability}
One can make the analogous definition of virtually 
bounding an immersed surface for any homologically trivial 
$1$-chain $C \in B_1(\Gamma)$, and in 
\cite{C_faces}, Calegari shows the following stability 
theorem, simplified slightly here.

\begin{theorem*}[\cite{C_faces}, Theorem~C]
Let $\SS$ be a compact, connected, orientable surface with 
boundary and $C \in B_1^H(\Gamma)$, where $\Gamma = \pi_1(\SS)$.  
Then for all sufficiently large $n$, 
the chain $n\partial \SS + C$ virtually bounds an 
immersed surface.
\end{theorem*}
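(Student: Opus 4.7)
The plan is to build the desired immersion combinatorially using a normal-surface-style model: encode surfaces by weight vectors on the pieces of a fixed decomposition of $\SS$, and use $n\partial \SS$ to push the weight vector into the interior of the cone of immersion weights.

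First, I would fix an ideal triangulation (or a pants decomposition) $\tau$ of $\SS$ with ideal vertices in the cusps. Any immersed surface $f : S \to \SS$ with $f(\partial S) \subseteq \partial \SS$ can, after isotopy, be put into normal form with respect to $\tau$ and then encoded by a nonnegative integer weight vector on the finitely many local piece types. Conversely, a weight vector $w$ assembles into a genuine immersion precisely when it satisfies matching equations along each edge of $\tau$ together with cyclic compatibility conditions around each vertex (ruling out interior branch points). Write $\CC$ for the open cone of positive real weight vectors satisfying both families of conditions; the boundary read-off map $w \mapsto \partial w$ carries $\CC$ onto the cone of chains that virtually bound an immersed surface.

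Second, observe that $\SS$ itself determines a strictly positive integer weight vector $w_{\SS} \in \CC$ with $\partial w_{\SS} = \partial \SS$. For the chain $C \in B_1^H(\Gamma)$, the vanishing of $[C] \in H_1$ allows me to solve the matching equations over $\R$ for a real weight vector $w_C$ with $\partial w_C = C$, though in general $w_C$ will fail to be nonnegative and may violate the cyclic compatibility conditions. However, since $\CC$ is defined by open conditions and $w_{\SS}$ lies in its strict interior, the linear combination $n w_{\SS} + w_C$ belongs to $\CC$ for all sufficiently large $n$. By linearity, its boundary read-off is $n \partial \SS + C$, so this chain virtually bounds an immersed surface after denominators are cleared by passing to a finite cover.

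The main obstacle will be verifying that $w_{\SS}$ really lies in the strict interior of $\CC$, i.e., that every cyclic compatibility condition at every vertex of $\tau$ is satisfied with slack. Positivity and edge-matching slack are automatic from the fact that $\SS$ is a genuine immersion with strictly positive weights, but the no-branch-point condition under small perturbations requires separate verification at each vertex, likely via an explicit local analysis of how corners can be cyclically reassigned without introducing folds. A secondary technical issue is ensuring that the boundary read-off is well-defined as a 1-chain in $B_1^H(\Gamma)$ rather than merely as a homology class, so that $\partial(n w_{\SS} + w_C) = n \partial \SS + C$ holds literally as an equality of chains.
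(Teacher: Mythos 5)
First, a bookkeeping point: the paper does not prove this statement; it is quoted verbatim from \cite{C_faces} as background, so there is no internal proof to compare against. Judged on its own terms, your proposal has a genuine gap, and it sits exactly at the point you dismiss as ``a secondary technical issue.'' The boundary of a surface assembled from a weight vector on the piece types of a fixed decomposition of $\SS$ is \emph{not} a linear function of the weights as an element of $B_1^H(\Gamma)$; only its class in $H_1$ is. Two assemblies with identical weight vectors but different gluings produce boundary loops in different conjugacy classes, so the ``boundary read-off map'' $w \mapsto \partial w$ is not well defined on weight vectors at the level of chains. Worse, a fixed ideal triangulation of $\SS$ has a finite-dimensional weight space, while $B_1^H(\Gamma)$ is infinite-dimensional (there are infinitely many conjugacy classes of hyperbolic elements), so no single such $\tau$ can realize an arbitrary chain $C$: solving the matching equations using only $[C]=0$ in $H_1$ produces a $w_C$ whose boundary is merely \emph{homologous} to $C$, not equal to it. The entire difficulty of the theorem is to control the actual boundary chain and the local immersion conditions simultaneously --- this is precisely the tension described in Section~\ref{sec:building_immersed_surfaces} of the present paper, where gluing immersion-compatible pieces freely makes the boundary uncontrollable and prescribing the boundary makes the polygons uncontrollable.

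The linear/openness part of your argument (an interior point of a cone plus an arbitrary vector lands in the cone after scaling) is the right intuition for why the result is a \emph{stability} statement, but to make it work the decomposition must be adapted to $C$: one needs pieces decorated by the specific letters of the words appearing in $C$ (a fatgraph or branched-surface structure in the sense of \cite{C_scallop} and \cite{W_scylla}), so that the boundary chain genuinely is determined by the combinatorial data. Once you do that, however, the existence of a strictly ``interior'' vector realizing $\partial\SS$ and compatible with every cyclic-order condition at every polygon is no longer automatic --- it is the main content of the proof in \cite{C_faces}, which proceeds by explicitly modifying a surface bounding $C + n\partial\SS$ using copies of the standard polygon (whose boundary is the cyclic order $O_\SS$) to absorb the incompatible pieces. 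So the proposal as written does not close, and the repair requires the substantive construction rather than a perturbation argument.
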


This theorem applies to orbifolds via lifting.
Here $B_1^H(\Gamma) = B_1(\Gamma) / \langle g^n = ng, hgh^{-1}=g\rangle$ 
is the space of \emph{homogenized} $1$-chains, which is more natural 
from the perspective of virtual immersions and $\scl$.
Stable commutator length is a norm on the vector 
space $B^H_1(\Gamma)$ (see~\cite{C_scl} for background), 
and in \cite{C_scallop}, Calegari shows 
that the $\scl$ norm ball is polyhedral, in that its restriction 
to any finite-dimensional subspace is a rational finite-sided polyhedron.
In \cite{C_faces}, Calegari shows that 
there is a distinguished codimension-one so-called \emph{geometric} face of the $\scl$ 
norm ball associated to the realization of the abstract group 
$\Gamma$ as the fundamental group of the orbifold $\SS$.  
This geometric face is dual to 
the rotation quasimorphism on $\Gamma$ induced by the 
circle action at infinity coming from the identification 
of $\Gamma = \pi_1(\SS) \subseteq \PSL(2,\R)$.
The $1$-chains projectively contained in the 
geometric face are exactly those which 
virtually bound immersed surfaces, so 
Theorem~C is the main technical result showing that 
the geometric face is codimension-one.

So \cite{C_faces} provides a fundamental connection between 
(virtual) immersions, $\scl$, and rotation quasimorphisms.  
Computer experiments led to the following conjecture:
\begin{conjecture*}[\cite{C_faces}, Conjecture~3.16]
Let $F= \langle a,b \rangle$ be a free group of rank $2$.  
Let $w \in [F,F]$ be any homologically trivial word.  
Then for sufficiently large $n$, $w[a,b]^n$ virtually 
bounds an immersed surface in the realization of $F$ as 
the fundamental group of the hyperbolic once-punctured 
torus with boundary $[a,b]$. 
\end{conjecture*}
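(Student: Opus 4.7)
The plan is to build an immersed surface whose boundary covers the geodesic representative $\gamma_n$ of $w[a,b]^n$ by assembling a bounded collection of ``$w$-polygons'' with a long chain of simple cusp-parallel pieces, and to exploit the large-$n$ hypothesis to make the resulting gluing combinatorics feasible.

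First I would analyze the shape of $\gamma_n$ for large $n$. Because $[a,b]$ is the cusp element of the once-punctured torus $\Sigma$, the geodesic $\gamma_n$ stays deep in a horocyclic neighborhood of the cusp for all but a bounded amount of time, where it makes a single excursion into the thick part of $\Sigma$ that realizes $w$. Using cutting sequences with respect to a standard fundamental domain, I would give a canonical combinatorial description of $\gamma_n$ as the concatenation of a bounded ``head'' depending only on $w$ and a long ``tail'' of length $\Theta(n)$ that is periodic with fundamental period $[a,b]$.

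Next I would assemble candidate pieces of two kinds: a \emph{fixed} finite collection of polygonal immersed disks (adapted from the Calegari--Louwsma polygon recipe) whose boundaries contain the $w$-excursion of $\gamma_n$ together with gluing arcs reaching into the cusp neighborhood; and a family of thin annular pieces parallel to the cusp, each covering exactly one $[a,b]$-period of the tail of $\gamma_n$ along its outer boundary. These pieces come with a finite list of ``arc types'' along which they can be glued. The task is to choose a nonnegative integer matching of non-$\gamma_n$ arcs so that (i) the total cone angle at every identification vertex is a multiple of $2\pi$ (the local immersion condition), and (ii) the boundary of the resulting surface is a single cover of $\gamma_n$.

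Finally, I would phrase this matching as an integer feasibility problem. The constraints coming from the $w$-pieces live in a fixed finite-dimensional polytope, while the $[a,b]$-data grows linearly with $n$; using that $w \in [F,F]$ to kill rational homological obstructions and a standard integer-programming argument, one should obtain feasibility for all $n$ beyond a bound depending on $w$. The main obstacle I expect is condition~(i): arranging the matching so that the resulting map is a genuine immersion with no spurious branch points in the interior of $\Sigma$, while still producing a connected boundary that is a cover of $\gamma_n$. This is the combinatorial heart of the problem, and it is exactly where the slack provided by large $n$ must be converted into simultaneous local angle-matching data.
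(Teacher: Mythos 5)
Your proposal sets up a reasonable geometric frame (the geodesic of $w[a,b]^n$ does wrap $\Theta(n)$ times near the cusp, and a decomposition into a bounded ``$w$-part'' plus periodic cusp-parallel pieces is essentially how the paper's partial fatgraph $Y'$ plus the repeated modules are organized), but it has a genuine gap: the entire difficulty of the theorem is concentrated in the step you defer. Your local condition (i) --- total cone angle a multiple of $2\pi$ at each identification vertex --- is the condition for a \emph{branched} map, not an immersion, and you acknowledge that excluding spurious branch points ``is the combinatorial heart of the problem'' without supplying a mechanism for it. A generic nonnegative integer matching of arc types, of the kind produced by the integer-programming argument you invoke, will typically create vertices of angle $4\pi$ or worse; this is exactly why extremal surfaces from the $\scl$ linear-programming picture are usually not immersed. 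What is needed is a \emph{restriction on which local gluing configurations are permitted} strong enough to force an immersion, together with a proof that the desired boundary is still achievable using only permitted configurations. The paper supplies this via Proposition~\ref{prop:immersion}: each polygon must be ``small'' and its boundary cyclic order must embed compatibly in the cyclic order $O_\SS$ coming from the hyperbolic structure (this simultaneously kills branching and guarantees positive orientation, via a pleated-surface argument), and then the explicit modules $A_i$, $A_{i,k}$, $B$ and the covering trick show the boundary $wb^{N+n}$ is realizable with only such polygons. Nothing in your proposal plays the role of this certificate.

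A second, more concrete failure point: you aim for a surface whose boundary is a \emph{single} cover of $\gamma_n$. This is not always possible --- for the once-punctured torus the covering degree of the immersed surface genuinely alternates between $1$ and $2$ with the parity of $n$ (this is why the paper's Step~3 must pass to a double cover and insert the module $B$ to realize odd increments of the power of $b$). An argument that insists on degree one will fail for roughly half the values of $n$, so the feasibility problem you set up would be infeasible as stated; you must allow, and then control, covers of bounded degree.
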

Note that \cite{C_faces}, Conjecture~3.16 and 
\cite{C_faces}, Theorem~C involve two similar, but 
definitely distinct, notions of stability; in Theorem~C, 
we are taking a formal sum with a multiple of the boundary, and 
in \cite{C_faces}, Conjecture~3.16, we are multiplying by it.

In~\cite{CL}, Calegari and Louwsma prove the 
analog of \cite{C_faces}, Conjecture~3.16 for $(2,p,\infty)$ 
orbifolds:
\begin{theorem*}[\cite{CL}, Theorem~3.1]
Let $\SS$ be a $(2,p,\infty)$ orbifold with 
boundary loop $b$, and let $w \in \pi_1(\SS)$ be any hyperbolic 
element.  Then for all sufficiently large $n$, 
$wb^n$ virtually bounds an immersed surface in $\SS$.
\end{theorem*}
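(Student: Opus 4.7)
The plan is to reduce the theorem to an extremality statement for the rotation quasimorphism and then explicitly construct the required immersed surface. By the main result of~\cite{C_faces}, a homologically trivial $1$-chain virtually bounds an immersed surface in $\SS$ if and only if it lies projectively in the geometric face of the $\scl$ norm ball, which is dual to the rotation quasimorphism $\rot$ arising from the circle action at infinity. So it suffices to show that $wb^n$ lies on the geometric face, i.e.\ that $\rot$ is extremal on its projective class, for all sufficiently large $n$. Since $\rot$ is a homogeneous quasimorphism of bounded defect and $b$ is parabolic, we have $\rot(wb^n) = n\,\rot(b) + \rot(w) + O(1)$, with $\rot(b)$ an explicit rational number determined by the local geometry at the cusp; this gives a concrete linear-in-$n$ target for the extremal Euler characteristic.

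The bulk of the proof is the construction of an immersed surface realizing this target. I would use a fatgraph (or ``turn graph'') model: orientation-preserving immersions $S\to\SS$ correspond to combinatorial graphs labeled by generators of $\pi_1(\SS)=\Z/2\ast\Z/p$, whose internal vertices satisfy angle conditions matching the cone points of orders $2$ and $p$, and whose boundary reads (a multiple of) the geodesic representative of $wb^n$ cyclically. The cyclic word $wb^n$ consists of a long periodic run of $b=xy$ together with a bounded core coming from $w$. Over the long run one builds a \emph{standard filling} by gluing copies of the ideal polygon at the cusp using the cone-point links; this is essentially the construction underlying Theorem~C specialized to the $(2,p,\infty)$ setting, and its Euler characteristic per unit length matches $\rot(b)$ exactly.

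The remaining task is to absorb the bounded word $w$ into this standard filling. For each possible configuration at the two $w$--$b^n$ interfaces, one must exhibit a local patch of fatgraph of bounded complexity that glues compatibly to the standard filling on the $b^n$ side and closes up on the $w$ side. The flexibility comes from the $p$-fold branching at the order-$p$ cone point and the involution at the order-$2$ cone point, together with the large buffer of $b$'s available for $n$ large. An Euler-characteristic count then shows the total cost equals $\rot(wb^n)$ up to an additive constant independent of $n$, and for $n$ large this constant is absorbed, yielding extremality.

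The main obstacle I expect is the local absorption step: one must guarantee, for \emph{every} hyperbolic $w$, the existence of a compatible patch, without the syllable structure of $w$ clashing with the standard filling. The way to handle this is to allow a transition zone of bounded length where one deviates from the periodic pattern, classify the finitely many interface types arising from a word of bounded length (since $w$ is fixed), and produce an explicit patch in each case; the hypothesis that $w$ is hyperbolic (hence not a power of $b$) should rule out the degenerate interfaces. This case analysis, together with the matching Euler characteristic computation, is where I expect the heart of the argument to lie.
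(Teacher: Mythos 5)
Your overall instinct---combinatorialize the problem via fatgraphs labeled by generators of $\Z/2 * \Z/p$, impose angle/branching conditions at the cone points, and fill a long periodic run of $b$'s by a standard pattern---is the right one, and it matches the spirit of the paper's proof (Theorem~\ref{thm:disk_orbifolds} together with Remark~\ref{rem:generalization_of_CL}). But two essential mechanisms are missing. First, your construction never explains how the surface \emph{closes up}. After you lay down group polygons for the syllables of $w$ and standard $\SS$ polygons along the $b^n$ run, you are left with many unglued finite-order edges, and there is no reason the number of unglued $c_j$-edges is divisible by $o_j$, nor that gluing them arbitrarily preserves the cyclic-order condition needed for an immersion. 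The paper resolves this with the covering trick: take $L=\mathrm{lcm}(o_j)$ copies of the partial fatgraph and glue group polygons \emph{within the fibers} over each unglued edge, which replaces each unglued $pe(c_j)$ by an honest labeled side without creating any new polygons. This is precisely where the word ``virtually'' in the statement comes from, and without it your Euler-characteristic bookkeeping has nothing to certify. Second, your ``absorption'' step only discusses the two interfaces between $w$ and $b^n$, but the entire interior of $w$ must be filled: the paper breaks $w$ into maximal runs $c_j^{e_k}$, attaches one group polygon per run (with monogons on top so the run appears on the boundary), and joins consecutive runs by polygons that are \emph{intervals} in the cyclic order $O_\SS$; that interval condition is exactly what makes Lemma~\ref{lem:boundary_with_unglued} force the leftover boundary to be a power of $b$, and it is also the hypothesis of the immersion certificate (Proposition~\ref{prop:immersion}). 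Your proposal has no analogue of either fact.

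A smaller point: the opening reduction to extremality of the rotation quasimorphism is a detour. The implication you actually need runs from ``immersed surface exists'' to ``extremal,'' so once you commit to constructing the surface explicitly (as you do), the $\rot$ computation does no work; conversely, deducing existence of the immersion \emph{from} extremality would require computing $\scl(wb^n)$, which is not easier. Also, the hypothesis that $w$ is hyperbolic is not used to rule out bad interface types; it is needed so that $wb^n$ has a geodesic representative at all (i.e., is neither elliptic nor parabolic), which is a precondition for ``virtually bounds an immersed surface'' and for the straightening in Proposition~\ref{prop:immersion}. Finally, the ``sufficiently large $n$'' step needs an actual mechanism for incrementing the power of $b$ by $1$; in the $(2,p,\infty)$ case this works because one cone point has order $2$, so attaching one more group polygon of that type adds exactly one copy of $b$ to the boundary.
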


There is a potential ambiguity here, in that the boundary loop $b$ is not an element but a 
conjugacy class.  But note that if the theorem holds for some representative 
of this conjugacy class, it holds for all of them, because the effect of changing 
representatives is essentially to change the word $w$.  This is also true of the next 
theorem.

\subsection{Results}

In this paper, we generalize \cite{CL}, Theorem~3.1 
(see Remark~\ref{rem:generalization_of_CL}, which addresses 
the issue of two vs three orbifold points) with the 
following theorem.

\begin{disk_orbifolds_thm}
Let $\SS$ be a hyperbolic orbifold whose underlying topological space is a disk and 
which has at least three orbifold points, of orders $\{o_j\}_{j=0}^{J-1}$. 
Let $w \in \Gamma = \pi_1(\SS)$ be any hyperbolic element, and let $b \in \Gamma$ be the 
boundary loop of $\SS$.  Then there exists $N \in \N$ so that 
for all $n\ge 0$, the loop $wb^{N+ng}$ virtually 
bounds an immersed surface, where $g = \gcd(o_0-1, \ldots, o_{J-1}-1)$.
\end{disk_orbifolds_thm}

We also resolve \cite{C_faces}, Conjecture~3.16, even in the 
presence of orbifold points

\begin{genus_orbifolds_thm}
Let $\SS$ be a hyperbolic orbifold with one boundary component and 
with genus at least $1$.  Let $\Gamma = \pi_1(\SS)$ with $b = \partial \SS \in \Gamma$,
and let $w \in \Gamma$ be hyperbolic so that some power of $w$ is homologically trivial.  
Then there exists $N\in \N$ so that 
for all $n\ge 0$, the loop representing $wb^{N+n}$ virtually bounds 
an immersed surface.
\end{genus_orbifolds_thm}

In the course of proving these theorems, we also give 
a useful combinatorial certificate (Proposition~\ref{prop:immersion})  
that a surface map into 
an orbifold $\SS$ is homotopic to an immersion with geodesic boundary, and hence 
a certificate that a collection of 
words in $\Gamma$ bounds an immersed surface.

\subsection{Outline}

In Section~\ref{sec:orbifolds}, we review cyclic orders and 
realizations of a group as the fundamental group of a hyperbolic 
orbifold.  In Section~\ref{sec:immersion},
we give a combinatorial parameterization of surface maps into orbifolds.
In Section~\ref{sec:stability}, we prove our main theorems.

\subsection{Acknowledgments}
We wish to thank Danny Calegari, Joel Louwsma, Neil Hoffman, 
and the anonymous referee.  Alden Walker 
was supported by NSF grant DMS~1203888.

\section{Hyperbolic orbifolds as realizations}
\label{sec:orbifolds}

\subsection{Cyclic orders}
Informally, a cyclic order on a set $S$ is an arrangement of $S$ 
around a circle, and there are several equivalent ways of formalizing 
this.  We define a \emph{cyclic order} on a set $S$ to be a function 
$O: S\times S \times S \to \{-1, 0, 1\}$ which says whether 
a triple of elements is positively or negatively ordered 
(or $0$, if not all elements in the triple are distinct).
A cyclic order must satisfy a compatibility condition on all $4$-tuples 
of elements; namely if $O(x,y,z)=O(w,x,z)=1$, then 
$O(x,y,w)=O(y,z,w)=1$, the idea being that if we know $(x,y,z)$ and 
$(w,x,y)$ are positively ordered, then we can conclude that 
the four elements are arranged in the order $[x,y,z,w]$, and 
the cyclic order $O$ must respect this.  As a shorthand for 
the function $O$, we will write cyclic orders in square brackets, 
as above, recording the (ordered) arrangement of the elements 
around a circle.  A cyclic order given in square brackets is invariant 
under cyclic permutations of the list, and the function 
$O(x,y,z)$ can be computed by rotating the list so that $x$ 
is first; the value is then $1$ if $y$ comes before $z$ and $-1$ 
otherwise.
 
If $T \subseteq S$ and $O_T$ and $O_S$ are cyclic orders on $T$ and $S$, 
respectively, then we say that $O_T$ and $O_S$ are \emph{compatible} 
if $O_S|_T = O_T$.  If $T$ and $S$ are finite and the orders 
are written as cyclic lists with square brackets, 
then $O_T$ and $O_S$ are compatible if $O_T$ is obtained 
from $O_S$ by simply removing the elements of $S \setminus T$.
For example, the orders $[a,c,b]$ and $[a,c,d,b]$ are compatible.
See~\cite{C_foliations}, Chapter~2.

\subsection{Realizations}

Let $\SS$ be a hyperbolic orbifold with one cusp, and let $\Gamma = \pi_1(\SS)$ 
be its fundamental group, so $\Gamma \subseteq \PSL(2,\R)$ is a group 
of isometries. 
We now find a nice generating set 
for $\Gamma$.  Let $R$ be a fundamental domain for the action, 
which is a polygon in $\H^2$ with some 
ideal vertices.  The group $\Gamma$ is a free product of 
cyclic groups, which we write 
$\Gamma = \left(*_{i=0}^{I-1} Z_i\right) * \left(*_{j=0}^{J-1} C_j\right)$, 
where each $Z_i$ is infinite cyclic (i.e. a copy of $\Z$) and 
is generated by $z_i$, and each $C_j$ is finite cyclic and 
is generated by $c_j$ with order $o_j$.  The $z_i$ are hyperbolic, and the $c_j$ 
are elliptic.  After conjugation, we may assume that 
the axes of the $z_i$ all pass through $R$, and the 
fixed points of the $c_j$ are all vertices of $R$.  
It is possible that there is more than one choice 
for the $c_j$, since a single orbifold point 
may appear multiple times as a vertex of $R$.  Any of the 
options will work.  We may also assume that 
all the $c_j$ rotate counterclockwise.  
We will always write words in $\Gamma$ using positive 
powers of the $c_j$.  For a given word $w$ in the given generators of $\Gamma$, 
we will call a specific generator at a specific location 
in $w$ a \emph{letter}, and we'll denote the letter in $w$ 
at position $k$ by $w_k$, with indices starting at $0$.

We call the orbifold $\SS$ together with the generating set 
$\{c_0,\ldots, c_{J-1},z_0,\ldots, z_{I-1}\}$ chosen as above a 
\emph{realization} of the abstract group 
$\left(*_{i=0}^{I-1} Z_i\right) * \left(*_{j=0}^{J-1} C_j\right)$, 
and we will always assume that our hyperbolic orbifolds 
come with such a generating set.

For each $z_i$, mark the intersections of the hyperbolic axis of 
$z_i$ with the boundary of $R$: the initial intersection with $z_i^{-1}$ 
and the terminal intersection with $z_i$.  Also mark the elliptic 
fixed point of $c_j$ by $c_j$.  Reading the boundary 
of $R$ counterclockwise, this induces a cyclic order 
on the set of generators
$S = \{z_0, z_0^{-1}, \ldots, z_{I-1}, z_{I-1}^{-1}, c_0, \ldots, c_{J-1}\}$.
Note that the set $S$ contains each $z_i$ and its inverse, but 
only the positive power of $c_j$.  We'll denote the cyclic 
order on $S$ by $O_\SS$.  

It may seem as though there is potential ambiguity in the cyclic order $O_\SS$, 
because some 
elliptic $c_j$ may have been associated with multiple vertices of $R$, 
and we chose the vertex to be labeled arbitrarily.  However, 
note that if we were to choose a different vertex, 
that would correspond to choosing a different (conjugate) 
generator in $\Gamma$, so while we would get a different cyclic order, 
it's also a genuinely different identification with 
$\left(*_{i=0}^{I-1} Z_i\right) * \left(*_{j=0}^{J-1} C_j\right)$.

\begin{figure}[[ht]
\begin{center}
\labellist
\small\hair 2pt
 \pinlabel {$z_0$} at 181 266
 \pinlabel {$z_1$} at 186 162
 \pinlabel {$c_0$} at 30 184
 \pinlabel {$c_1$} at 360 219
 \pinlabel {$c_2$} at 339 300
 \pinlabel {$0$} at 12 317
 \pinlabel {$1$} at 162 0
 \pinlabel {$2$} at 244 412
 \pinlabel {$3$} at 384 289
 \pinlabel {$4$} at 293 6
 \pinlabel {$5$} at 119 401
 \pinlabel {$6$} at 26 50
 \pinlabel {$7$} at 0 277
\endlabellist
\includegraphics[scale=0.6]{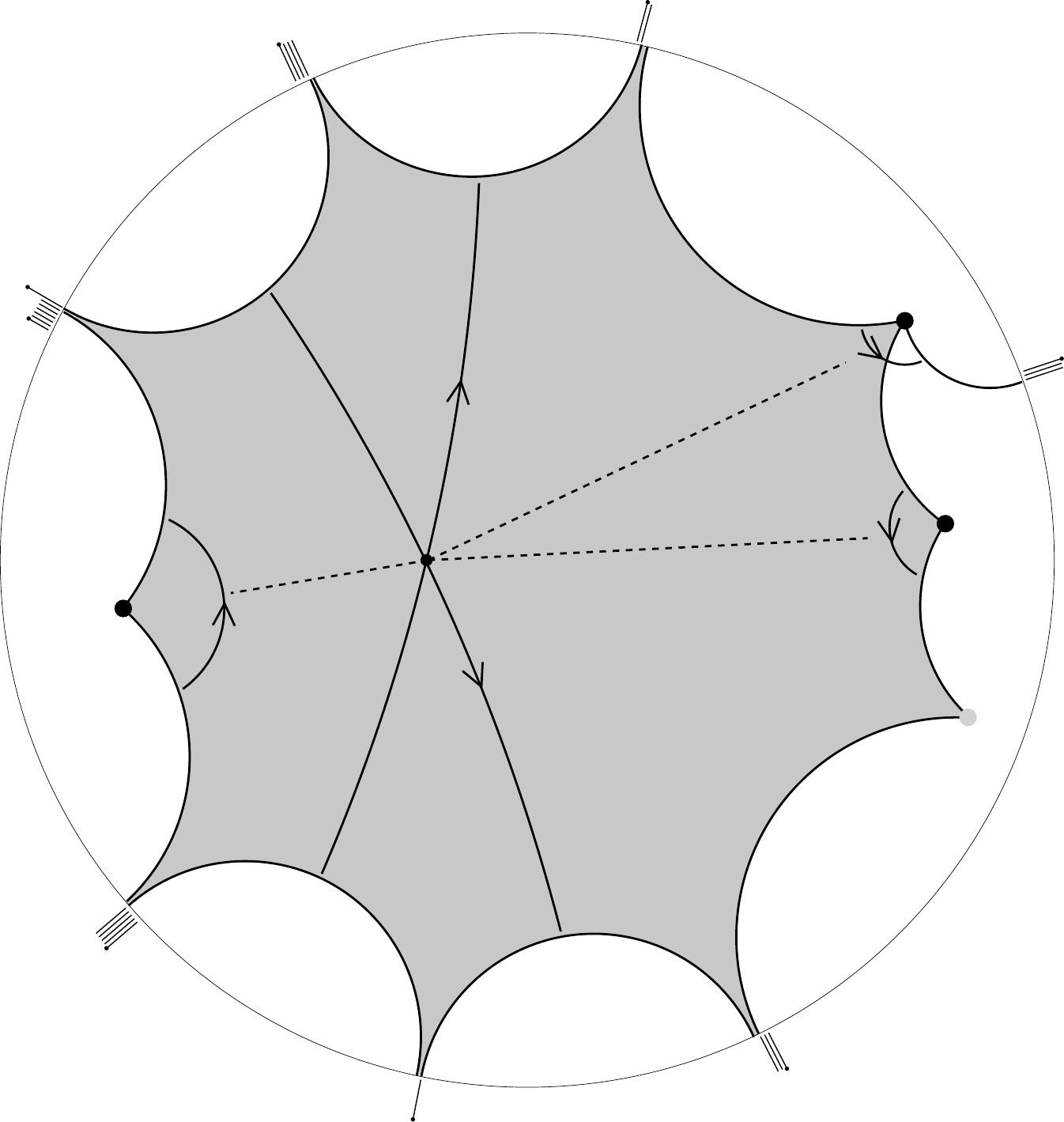}
\caption[A fundamental domain for an orbifold]{A fundamental domain 
for an orbifold, as 
described in Example~\ref{ex:fundamental_domain}.}\label{fig:fundamental domain}
\end{center}
\end{figure}

\begin{figure}[[ht]
\begin{center}
\labellist                                                                                                                                  
\small\hair 2pt                                                                                                                             
 \pinlabel {$z_0$} at 179 291                                                                                                             
 \pinlabel {$z_0^{-1}$} at 159 257                                                      
 \pinlabel {$z_1^{-1}$} at 190 2
 \pinlabel {$z_1$} at 149 34                                                                                                              
 \pinlabel {$c_0$} at 115 155                                                                                                             
 \pinlabel {$c_1$} at 328 99                                                                                                              
 \pinlabel {$c_2$} at 331 215                                                                                                             
\endlabellist  
\includegraphics[scale=0.6]{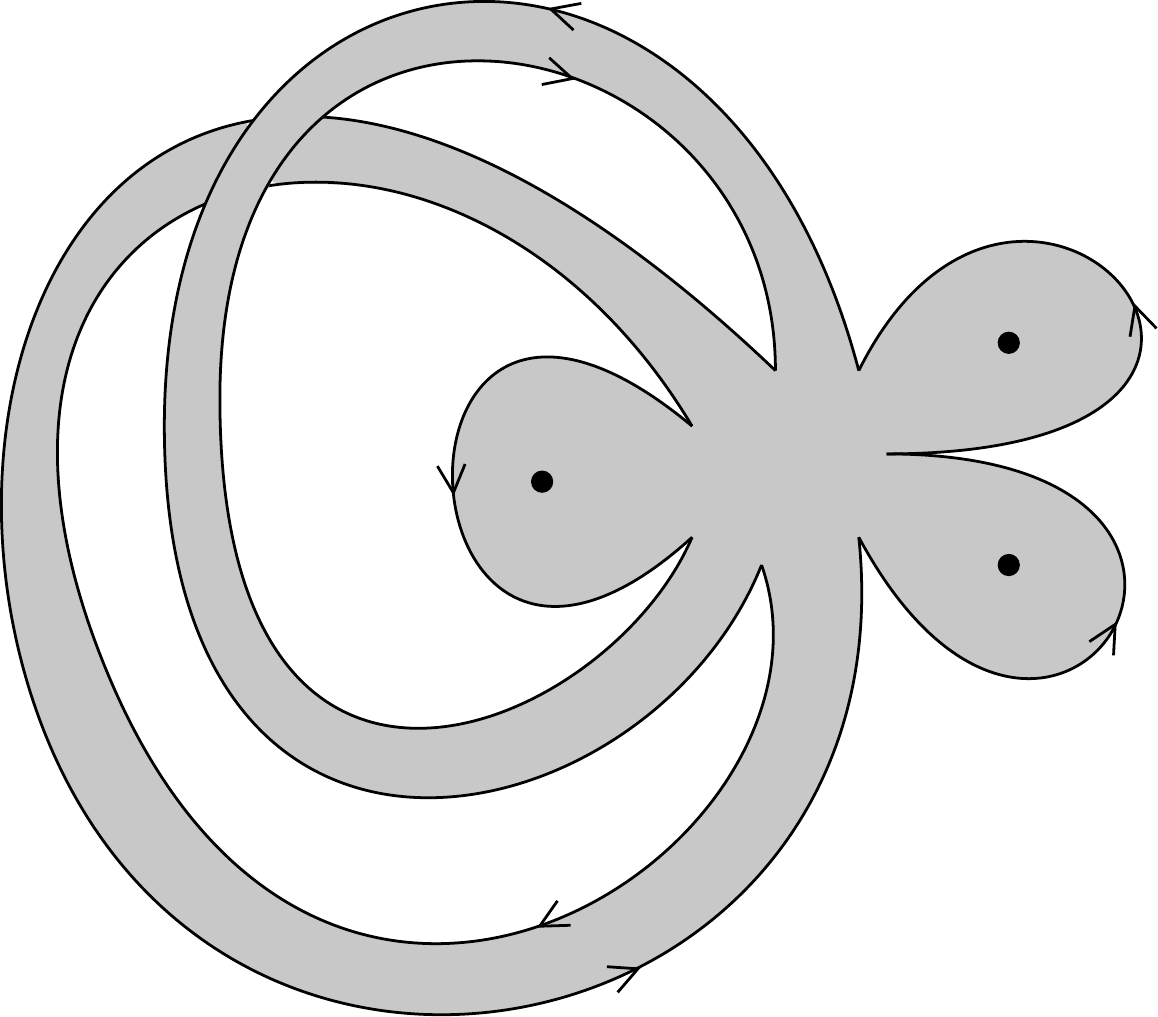}
\caption[A combinatorial version of the orbifold]{The 
orbifold from Figure~\ref{fig:fundamental domain}, topologically, with the 
loops representing the generators of $\Gamma$.  Note the boundary 
word is easy to read off.  The cyclic order $O_\SS$ on generators 
is the cyclic order around the central part of the surface. See Example~\ref{ex:fundamental_domain}.}\label{fig:topological_type}
\end{center}
\end{figure}

\begin{example}\label{ex:fundamental_domain}
Figure~\ref{fig:fundamental domain} shows the fundamental 
domain $R$ for a surface of genus one with one boundary component and 
three orbifold points.  Here all the orders $o_j$ are $4$.
To illustrate 
how the generators act, we've numbered the images of the point marked $0$ 
under the successive subwords of the boundary word 
$c_0z_0^{-1}z_1^{-1}c_1c_2z_0z_1$.  Recall $\Gamma$ acts on $\H^2$ on the 
left, so the successive subwords are suffixes of the boundary word.  
The induced cyclic order $O_\SS$ can be read off counterclockwise 
from the boundary of $R$, and it is $[c_1, c_2, z_0, z_1^{-1}, c_0, z_0^{-1}, z_1]$.
Figure~\ref{fig:topological_type} shows what the orbifold looks like, 
topologically.  Note it is easy to read off the boundary 
word and the cyclic order from Figure~\ref{fig:topological_type}.
\end{example}

\subsection{Core graphs of realizations}

Let $\SS$ be a realization of $\Gamma$ with generating set 
$\{c_0,\ldots, c_{J-1},z_0,\ldots, z_{I-1}\}$
and fundamental domain $R$, as above.  We will use the choice of 
generators and fundamental domain to define a graph on the orbifold $\SS$, 
as follows.  Recall that as part of the realization, we have points on the boundary 
of $R$ labeled by the elements of
$S = \{z_0, z_0^{-1}, \ldots, z_{I-1}, z_{I-1}^{-1}, c_0, \ldots, c_{J-1}\}$.
These points induce the cyclic order $O_\SS$ on $S$.

Let $p$ be a point in the interior of $R$ (a more central point makes a nicer picture, 
but it doesn't matter where it is).  Construct a directed graph 
$G'_\SS$ on $R$ with vertex set $\{p\}\cup S$ with edges as follows:
for each vertex $c_j$ in $S$, there is an edge from $p$ to $c_j$.  
For each vertex $z_i$, there is an edge from $p$ to $z_i$, and 
for each vertex $z_i^{-1}$, there is an edge from $z_i^{-1}$ to $p$.  
These edges can be made all embedded and disjoint in $R$: $R$ is 
topologically a disk, so we 
can clearly connect an interior point to arbitrary points on the boundary 
with a series of disjoint, embedded arcs. For example, we 
could make them geodesic arcs.

Now let $G_\SS$ be the quotient of $G'_\SS$, which is a graph in $\SS$.
The graph $G_\SS$ is the \emph{core graph} of the orbifold $\SS$ realizing $\Gamma$.
We now describe $G_\SS$ and name its parts so we can refer to them later.
The quotient map from $R$ to $\SS$ is an  embedding away from the boundary, so 
to know what $G_\SS$ is, it suffices to consider what happens to the 
vertices $z_i^{\pm 1}$ and $c_j$.  The $c_j$ vertices in $G'_\SS$ 
are each sent to one of the cone points.  The pair of vertices $z_i^{\pm 1}$ 
in $G'_\SS$ are identified into a single vertex in $G_\SS$, 
which we will denote by $z_i$.  So the vertex set of the core graph $G_\SS$ 
is $\{p,z_0,\ldots,z_{I-1},c_0,\ldots,c_{J-1}\}$.  For each 
$i$, there is an edge from $p$ to $z_i$ and from $z_i$ to $p$, and 
for each $j$, there is an edge from $p$ to $c_j$.
Figure~\ref{fig:core_graph} shows the core graph for the 
orbifold given in Example~\ref{ex:fundamental_domain}.

\begin{figure}[[ht]
\begin{center}
\labellist
\small\hair 2pt
 \pinlabel {$z_0$} at 181 344
 \pinlabel {$z_1$} at 210 40
 \pinlabel {$z_0^{-1}$} at 115 58
 \pinlabel {$z_1^{-1}$} at 96 305
 \pinlabel {$c_0$} at 30 178
 \pinlabel {$c_1$} at 360 213
 \pinlabel {$c_2$} at 339 290
 \pinlabel {$p$} at 140 199
\endlabellist
\includegraphics[scale=0.48]{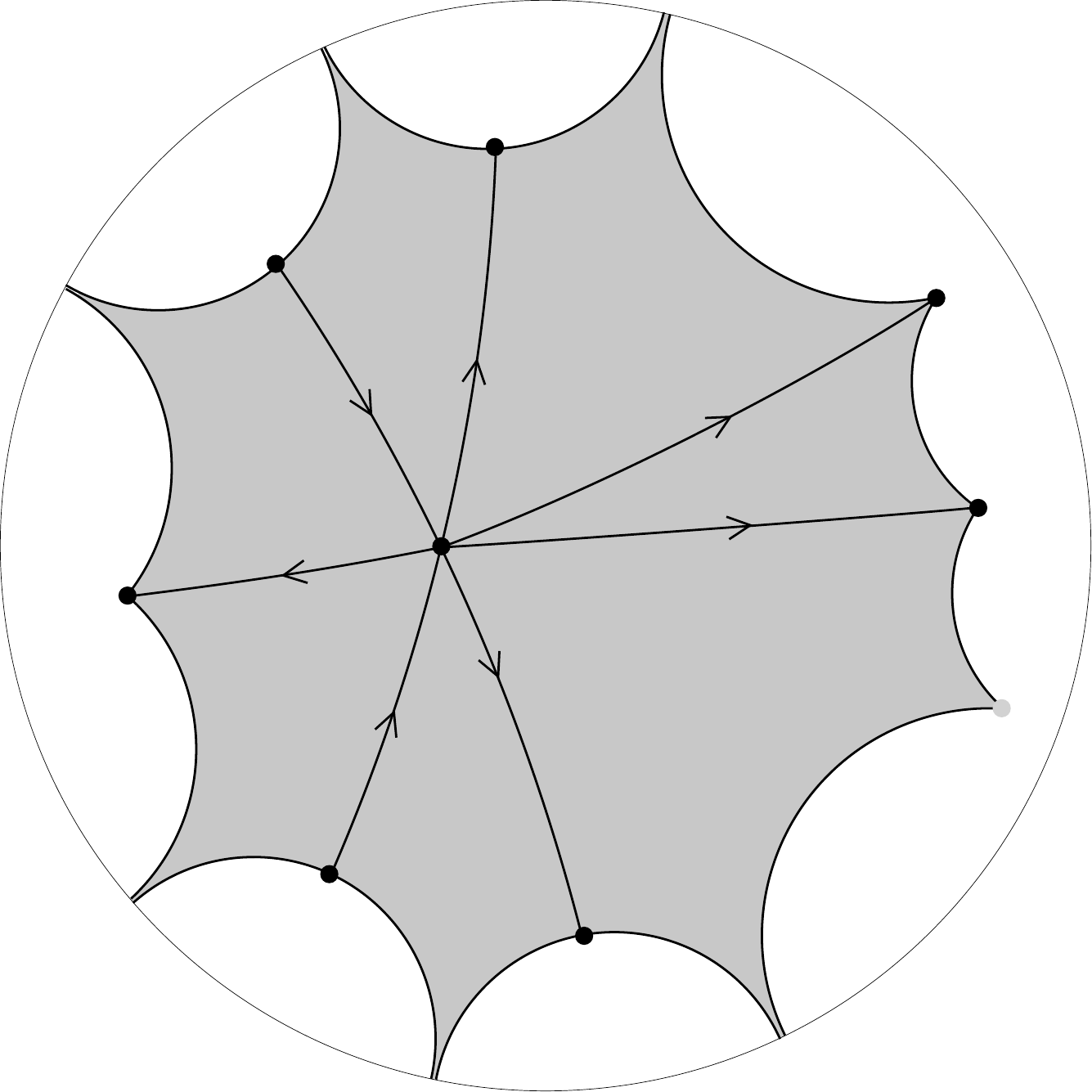}
\hspace{1mm}
\labellist
\small\hair 2pt
 \pinlabel {$c_2$} at 305 198
 \pinlabel {$c_0$} at 151 143
 \pinlabel {$c_1$} at 296 119
 \pinlabel {$z_0$} at 77 199
 \pinlabel {$z_1$} at 40 107
 \pinlabel {$p$} at 245 163
\endlabellist
\includegraphics[scale=0.48]{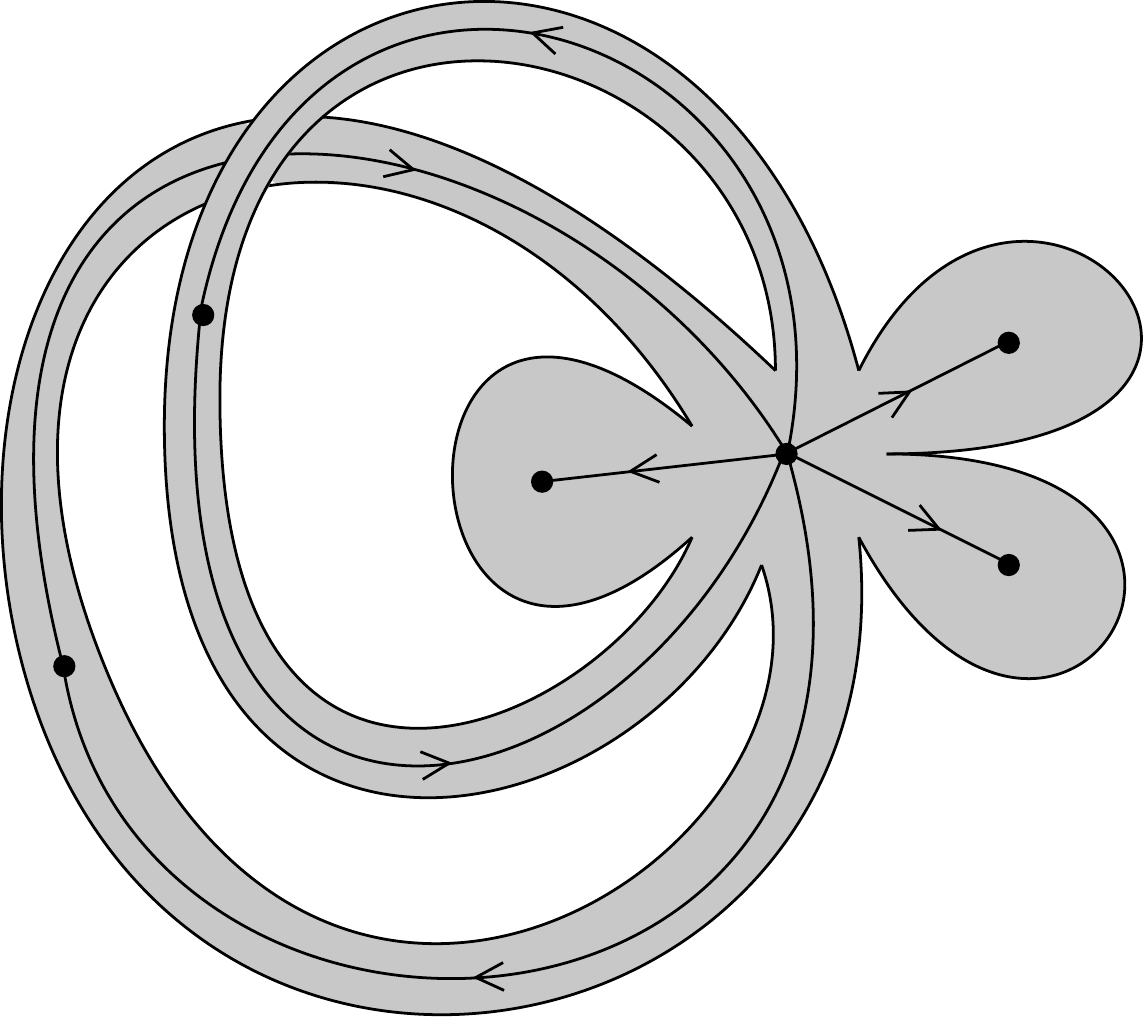}
\caption[The core graph of an orbifold]{The core graph of the realization described in 
Example~\ref{ex:fundamental_domain}.  Left, the graph $G'_\SS$ is shown 
drawn in the fundamental domain $R$.  Right, the core 
graph $G_\SS$ is shown on the orbifold, drawn as in Figure~\ref{fig:topological_type}.}
\label{fig:core_graph}
\end{center}
\end{figure}

Consider the vertex $p$ in the graph $G_\SS$.  Since the graph $G_\SS$ 
comes with an embedding in $\SS$, the vertex $p$ 
has a cyclic order on the incident edges obtained by simply reading the 
directed labels in counterclockwise order around $p$ (where the incoming edge 
from $z_i$ to $p$ is read as $z_i^{-1}$).  Note that 
these labels are exactly $S$, and this cyclic order is exactly $O_\SS$.

\subsection{Covering trees of core graphs}

The core graph of the hyperbolic orbifold $\SS$ realizing $\Gamma$ 
is a graph $G_\SS$ embedded in $\SS$.  
The preimage $\widetilde{G}_\SS$ of $G_\SS$ in 
the universal cover $\widetilde{\SS} = \H^2$ is a graph in $\H^2$.  
We call this graph the \emph{covering tree} of $G_\SS$.  We will 
verify momentarily that it is, in fact, a tree.
In $G_\SS$, each vertex $c_j$ has just a single incoming edge.  
In the covering tree, the preimages of the vertex $c_j$ 
have $o_j$ incoming edges, where recall $o_j$ is the order of the 
generator $c_j$.  This is quite natural, since 
the covering map $\H^2 \to \SS$ branches at the preimages of the 
cone points.  See Figure~\ref{fig:core_graph_covering_tree}.

\begin{figure}[[ht]
\begin{center}
\includegraphics[scale=0.6]{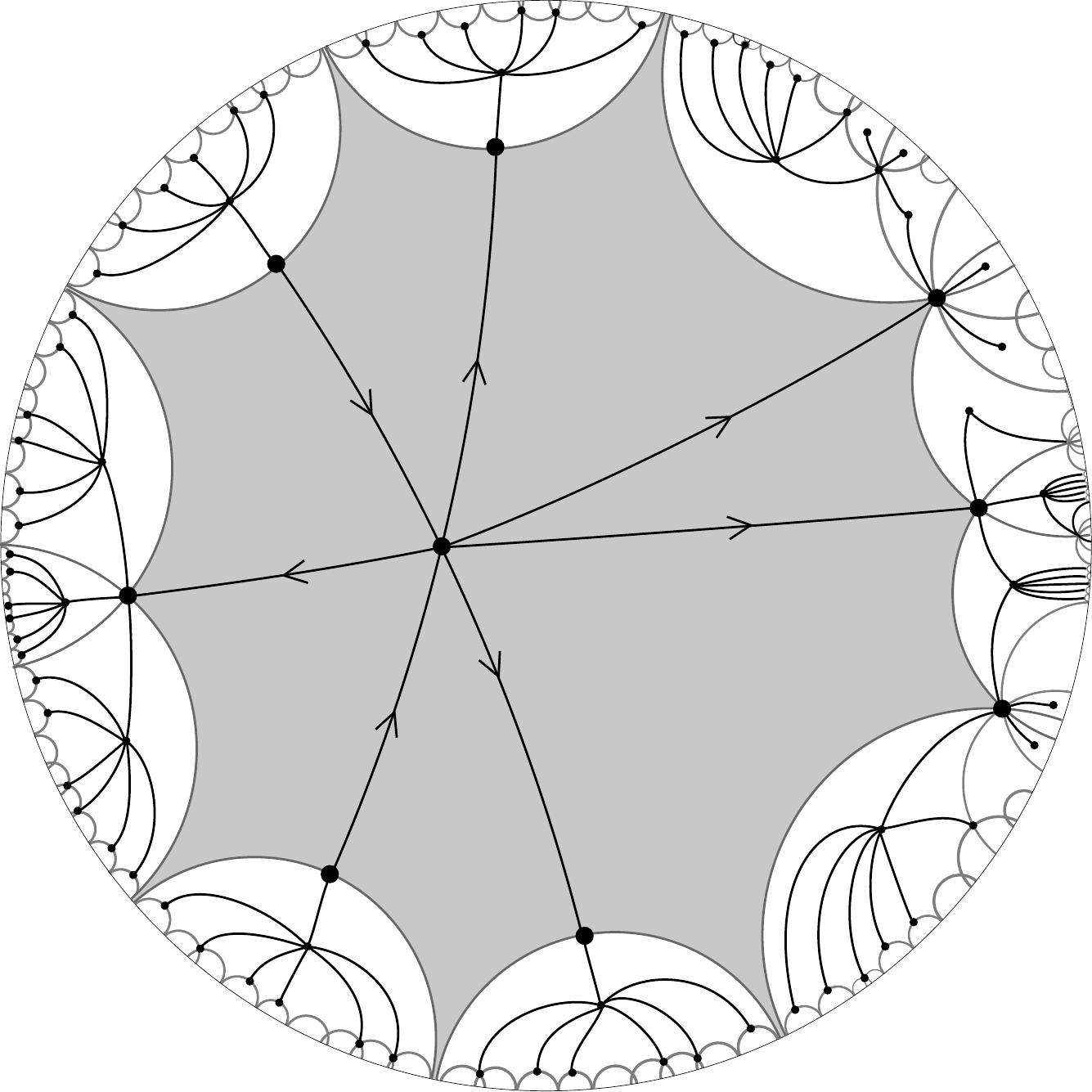}
\caption[The covering tree of the core graph of an orbifold]{The covering tree of 
the core graph in Figure~\ref{fig:core_graph}.}
\label{fig:core_graph_covering_tree}
\end{center}
\end{figure}

Since $G_\SS$ is embedded in $\SS$, the preimage $\widetilde{G}_\SS$ 
is embedded in $\H^2$.  The fact that $\widetilde{G}_\SS$ is a 
tree is quite straightforward to see intuitively, since 
$\H^2$ is the universal cover of $\SS$, and $G_\SS$ carries part of the 
fundamental group, but we go through it carefully.
To see that it is a tree, suppose that 
we have a loop $\gamma$ in $\widetilde{G}_\SS$.  Now, $\gamma$ must 
pass through a preimage of the vertex $p$ since the vertices $z_i$ and
$c_j$ are connected only to $p$ in $G_\SS$.  
Reading the vertex labels around $\gamma$ 
gives a word in $\Gamma$ taking a preimage of $p$ to itself 
(where each time we pass through a preimage of $c_j$, 
we must choose the appropriate power of $c_j$ to obtain the desired 
angle, and each time we pass through $z_i$, we record $z_i$ or $z_i^{-1}$, 
depending on whether we crossed the edges adjacent to $z_i$ 
respecting the direction).  Any word in $\Gamma$ taking a preimage of $p$ to 
itself is trivial, so it must be (a conjugate of) the word $c_j^{no_j}$ 
for some $n$.  But this word produces a trivial path, so $\gamma$  
is a trivial loop, and we see that $\widetilde{G}_\SS$ must be a tree.

\section{Cyclic fatgraphs and immersions in orbifolds}
\label{sec:immersion}

\subsection{Cyclic fatgraphs}

Our proofs will build surface immersions using \emph{cyclic fatgraphs over $\Gamma$}, 
which are combinatorialized surface maps into $\SS$.  A cyclic fatgraph over $\Gamma$ 
is a surface which is built out of \emph{pieces}, which are \emph{rectangles}, 
\emph{polygons}, and \emph{group polygons}, and where the pieces are 
glued along \emph{edges}.  We now define all these terms.  

A \emph{rectangle} is a $2$-cell whose boundary is an oriented 
simplicial loop with $4$ $1$-simplices.  
We think of a rectangle as a rectangular strip.  It is labeled on one 
long side by an infinite order generator $z_i$ and 
on the other by its inverse $z_i^{-1}$.  The notation 
for such a rectangle is $r(z_i)$.  The short $1$-simplices 
of the rectangle $r(z_i)$ are \emph{rectangle edges}, 
and a rectangle edge is denoted by $re(z_i)$ or $re(z_i^{-1})$ 
depending on which long labeled side comes \emph{after} 
the rectangle edge.  See Figure~\ref{fig:rectangle}.

\begin{figure}[ht]
\labellist
\small\hair 2pt
 \pinlabel {$z_i$} at 81 -7
 \pinlabel {$re(z_i^{-1})$} at 200 47
 \pinlabel {$z_i^{-1}$} at 72 93
 \pinlabel {$re(z_i)$} at -36 47
\endlabellist
\includegraphics[scale=0.6]{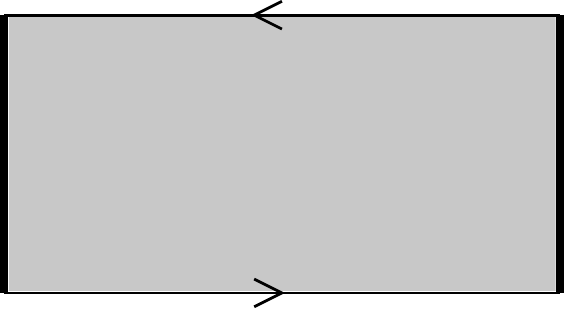}
\caption{A rectangle, with two labeled sides and two edges (bold).}
\label{fig:rectangle}
\end{figure}

A \emph{group polygon} is a $2$-cell whose boundary is an oriented 
simplicial loop with simplices alternating between 
labeled sides and \emph{group polygon edges}.  The labeled sides are all labeled 
by the same finite order generator, and there must be exactly 
as many labeled sides as the order $o_j$ of $c_j$.  
Every group polygon edge in such a group polygon is denoted by $ge(c_j)$.
The notation for this group polygon is $g(c_j)$.
See Figure~\ref{fig:group_polygon}.

\begin{figure}[ht]
\labellist
\small\hair 2pt
 \pinlabel {$c_j$} at 118 40
 \pinlabel {$ge(c_j)$} at 149 80
 \pinlabel {$c_j$} at 97 107
 \pinlabel {$c_j$} at 33 108
 \pinlabel {$c_j$} at 8 44
 \pinlabel {$c_j$} at 62 0
\endlabellist
\includegraphics[scale=0.8]{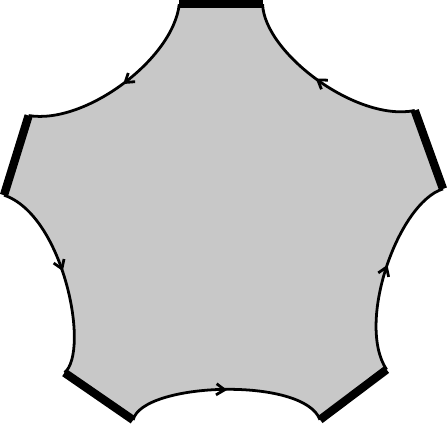}
\caption{A group polygon in the case that $o_j=5$.}
\label{fig:group_polygon}
\end{figure}

A \emph{polygon} is a $2$-cell 
whose boundary is an oriented simplicial complex 
whose simplicies are all \emph{polygon edges}.  
A polygon edge can be one of $pe(c_j)$, $pe(z_i)$, 
or $pe(z_i^{-1})$.  There is a restriction 
that a polygon must be locally reduced, which means that 
$pe(z_i)$ cannot immediately follow $pe(z_i)$, and similarly 
for the inverses.
A nondegenerate polygon can have two or more edges.  We will often refer 
to polygons with the name appropriate to their number of edges, 
for example bigon, triangle, square, etc.
See Figure~\ref{fig:polygon}.  For technical reasons, it 
is convenient to allow polygons with a single edge (a monogon).  
Such polygons may only have edges of the form $pe(c_j)$ 
(a finite-order generator).  Monogons are needed to allow 
repeated copies of $c_j$ to appear on the boundary of 
a cyclic fatgraph.  Figure~\ref{fig:pinching} contains an example.

\begin{figure}[ht]
\labellist
\small\hair 2pt
 \pinlabel {$pe(z_0)$} at 240 70
 \pinlabel {$pe(c_0)$} at 110 208
 \pinlabel {$pe(z_1)$} at -40 124
 \pinlabel {$pe(z_0^{-1})$} at 80 -20
\endlabellist
\includegraphics[scale=0.3]{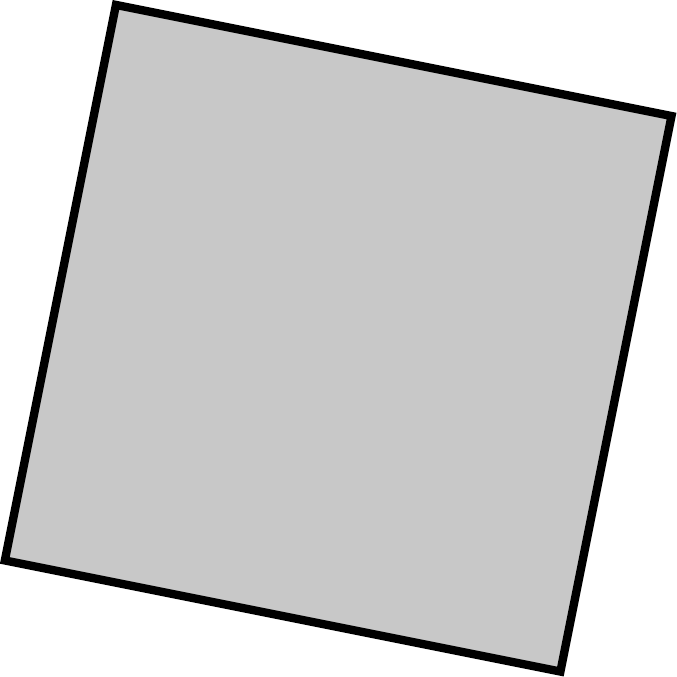}
\caption{A polygon.  All the sides are edges.  This square 
appears in the cyclic fatgraph shown in Figure~\ref{fig:pinching}.}
\label{fig:polygon}
\end{figure}

When dealing with these pieces, we will often refer to finite-order 
or infinite-order edges, meaning rectangle, group polygon, 
and polygon edges, as appropriate.

A \emph{cyclic fatgraph over $\Gamma$} is a surface with 
a simplicial structure such that every $2$-cell has the structure 
of a rectangle, polygon, or group polygon.
If a $1$-simplex is the boundary of two $2$-cells, 
then one of the $2$-cells must be a polygon and the other 
must be a rectangle or group polygon, and the simplex of 
intersection must be an edge in both, and the labels must match, e.g. 
$pe(z_i)$ is glued to $re(z_i)$.
That is, a 
cyclic fatgraph over $\Gamma$ is a surface built out of 
rectangles and group polygons by gluing them
together around polygons along edges.  See Figure~\ref{fig:cyclic_fatgraph}.
In our drawings, including Figure~\ref{fig:cyclic_fatgraph}, 
note that where the rectangles and group polygons appear to 
attach directly to each other, there is technically a bigon 
(polygon with two sides) joining them.  This technicality 
is useful to avoid special cases in the definition and 
for some definitions to follow.

\begin{figure}[ht]
\labellist
\small\hair 2pt
 \pinlabel {$z_0$} at 430 0
 \pinlabel {$z_0^{-1}$} at 398 60
 \pinlabel {$c_0$} at 534 106
 \pinlabel {$c_0$} at 530 189
 \pinlabel {$c_0$} at 455 189
 \pinlabel {$c_0$} at 453 104
 \pinlabel {$z_1^{-1}$} at 587 244
 \pinlabel {$z_1$} at 550 198
 \pinlabel {$z_0^{-1}$} at 336 226
 \pinlabel {$z_0$} at 338 160
 \pinlabel {$z_1$} at 335 130
 \pinlabel {$z_1^{-1}$} at 334 65
 \pinlabel {$c_1$} at 217 190
 \pinlabel {$c_1$} at 140 188
 \pinlabel {$c_1$} at 140 106
 \pinlabel {$c_1$} at 222 103
 \pinlabel {$z_0$} at 4 243
 \pinlabel {$z_0^{-1}$} at 53 205
 \pinlabel {$z_1$} at 213 -5
 \pinlabel {$z_1^{-1}$} at 233 59
\endlabellist
\includegraphics[scale=0.5]{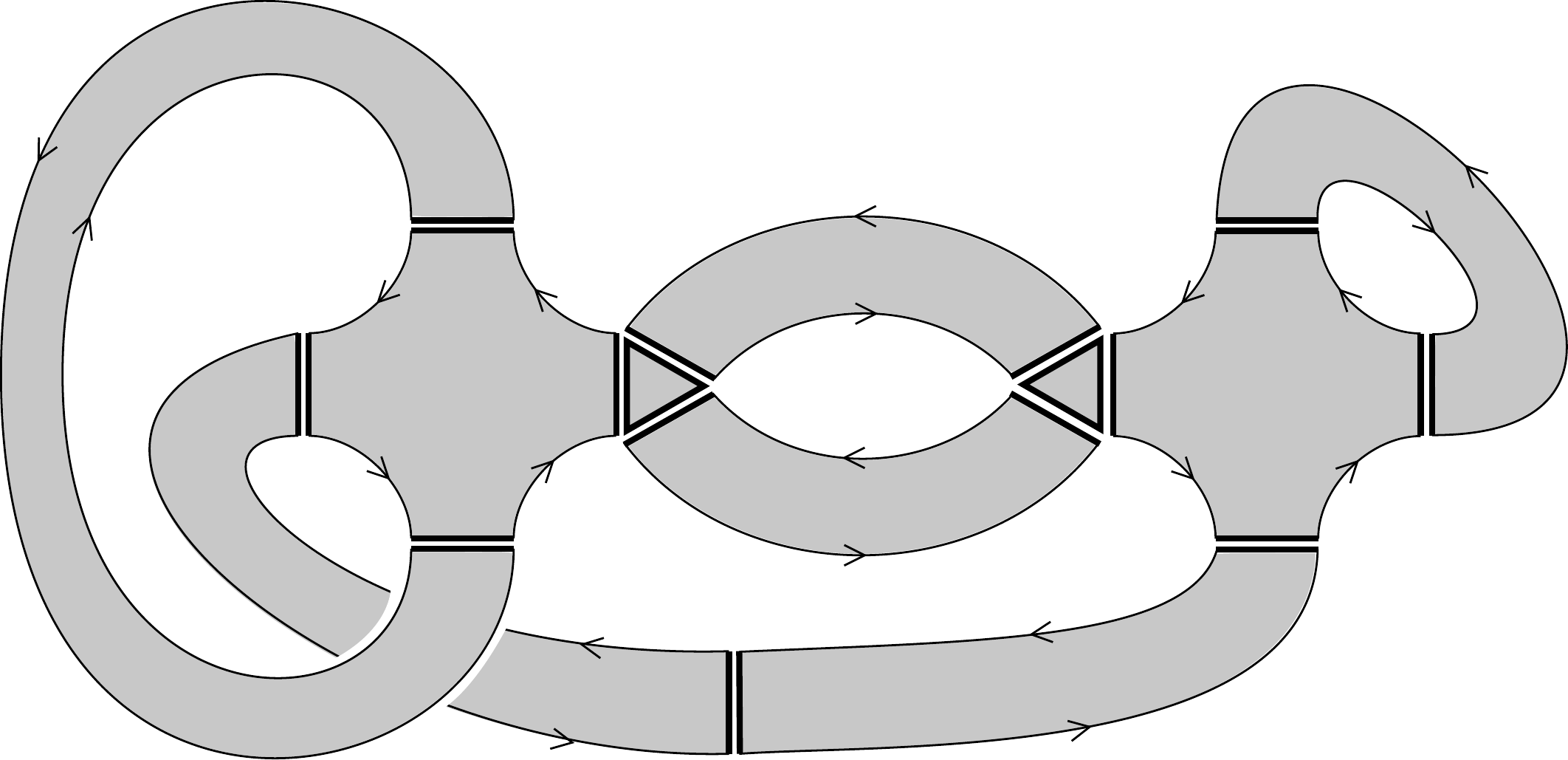}
\caption{A cyclic fatgraph over the group 
$\Gamma$ from Example~\ref{ex:fundamental_domain}.  Recall 
that $o_j=4$ for all $j$, so each group polygon has four sides.  
This fatgraph has boundary
$z_0c_0z_1^{-1}c_0z_0^{-1}c_1z_0c_1z_1^{-1}c_0z_0^{-1}z_1^{-1}c_1z_0^{-1}c_1z_1 
+ z_0z_1 + z_1c_0$.  There are seven bigons and two triangles.  }
\label{fig:cyclic_fatgraph}
\end{figure}

\subsection{Spines of cyclic fatgraphs}

Given a cyclic fatgraph $Y$, we define the \emph{spine} $G_Y$ 
of $Y$, which is a directed graph on $Y$, to be the graph 
dual to the cellulation of $Y$ by the polygons, rectangles, and 
group polygons in $Y$.  Since there is a vertex for each piece, 
we call the vertices polygon, rectangle, or group polygon vertices 
as appropriate.  Orient the edges of $G_Y$ so that 
every edge between a polygon vertex and a group polygon vertex is directed 
away from the polygon vertex.  Orient the two edges incident to a 
rectangle vertex so that their orientation agrees with the orientation on the 
side of the rectangle labeled by $z_i$ (and against the orientation 
on the side labeled by $z_i^{-1}$).  
Note that $Y$ deformation retracts to its spine.  
See Figure~\ref{fig:spine}.

\begin{figure}[ht]
\labellist
\small\hair 2pt                                                                                                                                                
 \pinlabel {$c_1$} at 136 61                                                                                                                                 
 \pinlabel {$c_1$} at 133 96                                                                                                                                 
 \pinlabel {$c_1$} at 101 95                                                                                                                                  
 \pinlabel {$c_1$} at 101 60                                                                                                                                 
 \pinlabel {$c_0$} at 63 60                                                                                                                                  
 \pinlabel {$c_0$} at 62 96                                                                                                                                  
 \pinlabel {$c_0$} at 31 96                                                                                                                                  
 \pinlabel {$c_0$} at 28 61                                                                                                                                  
 \pinlabel {$z_1^{-1}$} at 115 22                                                                                                                                 
 \pinlabel {$z_1$} at 98 46                                                                                                                                  
 \pinlabel {$z_0^{-1}$} at 163 118                                                                                                                                
 \pinlabel {$z_0$} at 143 100                                                                                                                                
 \pinlabel {$z_0$} at 96 103                                                                                                                                 
 \pinlabel {$z_0^{-1}$} at 82 126                                                                                                                                 
 \pinlabel {$z_1$} at 22 101                                                                                                                                 
 \pinlabel {$z_1^{-1}$} at 8 121                                                                                                                                  
 \pinlabel {$z_0$} at 65 -2                                                                                                                                   
 \pinlabel {$z_0^{-1}$} at 51 21                                                                                                                                  
\endlabellist  
\includegraphics[scale=1.2]{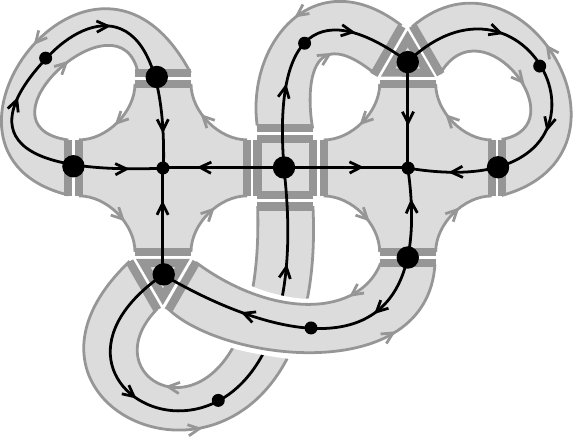}
\caption{The spine of a cyclic fatgraph.  The polygon vertices are 
drawn larger to differentiate them from the rectangle and group polygons 
vertices.  Note there are four 
polygon vertices in the (invisible) bigons.}
\label{fig:spine}
\end{figure}

\subsection{Immersed surfaces in orbifolds}

Let $\SS$ be an orbifold realizing the group $\Gamma$ 
with core graph $G_\SS$, and suppose that we have a cyclic fatgraph $Y$ 
over $\Gamma$, as defined above.  There is a natural 
simplicial map from the spine $G_Y$ of $Y$ to the core graph $G_\SS$ 
defined by sending the polygon, rectangle, and group polygon 
vertices of $G_Y$ to the $p$, $z_i$, and $c_j$ vertices of $G_\SS$, 
and by mapping the edges so as to preserve orientation.
Let $f_Y:Y \to \SS$ be the map deformation retracting $Y$ to its spine, and 
then mapping $G_Y$ to $G_\SS$ as above.  We call this map the \emph{fatgraph map}
induced by $Y$.  Note that $f_Y$ is a map of a surface with boundary 
into the orbifold $\SS$.  Though we will not need it, 
we remark that every map of a surface factors through a fatgraph map:

\begin{lemma}[Lemma~4.4 in \cite{W_scylla}]
\label{lem:factor_through_fatgraph}
After compression and homotopy, 
every surface map $f:S \to \SS$ factors as $f_Y \circ i$, 
where $i:S \to Y$ is a homeomorphism between $S$ and 
a cyclic fatgraph over $\Gamma$, and $f_Y$ is the 
fatgraph map.
\end{lemma}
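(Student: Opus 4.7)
The plan is to put $f$ into a normal form with respect to the core graph $G_\SS$ of $\SS$ and then compress away any trivial topology; the resulting cell decomposition of $S$ will be exactly that of a cyclic fatgraph over $\Gamma$.

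First I would homotope $f$ to be transverse to the edges of $G_\SS$, arranging near each preimage of a cone point $c_j$ that $f$ has the local form of the orbifold branched cover of order $o_j$. Then $G := f^{-1}(G_\SS)$ is a graph in $S$ whose vertices are labeled by the vertex of $G_\SS$ they cover, and whose complementary regions each map into the interior of the fundamental domain $R$.

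Next, I would thicken $G$ to a regular neighborhood $N(G)$ in $S$. A neighborhood of an edge of $G$ covering a $z_i$-edge of $G_\SS$ is a rectangle $r(z_i)$ with short sides $re(z_i)$, $re(z_i^{-1})$; a neighborhood of a vertex of $G$ covering $c_j$ is a disk whose boundary alternates between $c_j$-labeled sides and group polygon edges, i.e.\ a group polygon $g(c_j)$, with the required $o_j$ $c_j$-sides forced by the local branching data. Each complementary region in $S \setminus N(G)$ is bounded by a cyclic word in rectangle and group polygon edges, and we want each to be a disk with locally reduced boundary, i.e.\ a polygon in the fatgraph sense.

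Finally, apply compressions and boundary simplifications to reach this normal form. If a complementary region is not a disk, its image sits inside the (punctured) disk $R$, so some essential simple closed curve on $S$ in that region maps to a null-homotopic loop in $\SS$; we compress along it. If the boundary of a complementary region has a spur --- two adjacent edges $pe(z_i)$, $pe(z_i^{-1})$ meeting at a rectangle that is glued to the same polygon along both short sides --- we homotope $f$ across that rectangle to cancel them, removing edges of $G$. An analogous move handles repeated $pe(c_j)$ spurs at group polygons. Each such move strictly reduces a natural complexity, say $-\chi(S)$ plus the number of edges of $G$, so the procedure terminates. The resulting decomposition makes $S$ a cyclic fatgraph $Y$ over $\Gamma$, and $f$ is then homotopic to $f_Y \circ i$ where $i : S \to Y$ is the identifying homeomorphism. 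The main obstacle is verifying that these two types of moves actually suffice to reach the normal form --- equivalently, that any configuration minimal under the moves already has every complementary region a locally reduced disk --- which is the standard argument for simplifying surface maps to PL-minimal representatives, carried out here with appropriate care at the cone points of $\SS$ so that branching of the correct order is preserved.
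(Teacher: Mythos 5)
First, for context: the paper does not prove this lemma. It is imported verbatim-in-spirit from Lemma~4.4 of \cite{W_scylla} (and attributed to Culler in the free-group case), and the author explicitly remarks that it is not needed for the main results. So there is no in-paper proof to compare against; your proposal has to stand on its own. Its overall strategy --- put $f$ into a normal form relative to a $1$-complex in $\SS$, thicken the preimage, and compress away excess topology --- is indeed the standard Culler-type argument that \cite{W_scylla} carries out, so the high-level plan is right.

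There is, however, a concrete error in the middle. You take $G = f^{-1}(G_\SS)$, the preimage of the \emph{core graph}, and then assert that the complementary regions map into the interior of the fundamental domain $R$, so that essential curves in them are null-homotopic and can be compressed. But $\SS \setminus G_\SS$ is not the interior of $R$: the core graph is a spine of $\SS$ (the vertex $p$ and its edges lie inside $R$), and its complement is a half-open annular neighborhood of the cusp. An essential simple closed curve in a non-disk complementary region can therefore map to a nontrivial power of the (parabolic) boundary class, and the compression step fails as stated. Relatedly, the identification of pieces is backwards: if one thickens $f^{-1}(G_\SS)$, the polygons of the fatgraph arise as the neighborhoods of the preimages of the central vertex $p$ (these become the polygon vertices of the spine $G_Y$), not as the complementary regions of $N(G)$; those complementary regions must instead be shown to be collar annuli of the boundary components of $S$, which is also where the behavior of $f|_{\partial S}$ --- never addressed in your sketch --- has to enter. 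The clean fix is to run the argument on the $1$-complex \emph{dual} to $G_\SS$, namely the image of $\partial R$ in $\SS$ (arcs through the cone points and across the axes of the $z_i$), whose complement genuinely is the open disk given by the interior of $R$; then all loops in complementary regions are null-homotopic, the compressions and removal of inessential arcs and circles are justified, the surviving complementary disks become the polygons, and the thickened preimage arcs become the rectangles and group polygons. A last, minor point: your ``locally reduced'' condition misquotes the paper's, which forbids $pe(z_i)$ immediately following $pe(z_i)$ (same label), not $pe(z_i)$ adjacent to $pe(z_i^{-1})$ along a folded rectangle; both kinds of cancellation move are needed in a complete argument.
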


In~\cite{W_scylla}, the lemma is not stated exactly in this 
way, but it follows from the proof.  In the case of 
free groups, this lemma is due to Culler \cite{Culler}.

The main result of this section is that there is a local 
combinatorial certificate that the map $f_Y$ is 
homotopic to an immersion with geodesic boundary.  We now 
describe this certificate.

Recall that $S$ is the set of generators
$\{z_0, z_0^{-1}, \ldots, z_{I-1}, z_{I-1}^{-1}, c_0, \ldots, c_{J-1}\}$ of $\Gamma$, 
and $O_\SS$ is the cyclic order on $S$ determined by the realization $\SS$.  
Let us be given a polygon $P$ in a 
cyclic fatgraph over $\Gamma$.  The edges of 
$P$ have an intrinsic cyclic order, so the set 
of labels on the edges of $P$ is a cyclically ordered multiset. 
The cyclically ordered multiset 
of labels of the edges of $P$ will be denoted by $\partial P$.
For example, if the edges of $P$ are 
$[pe(z_0^{-1}), pe(c_0), pe(c_1), pe(z_1^{-1}), pe(c_0)]$, 
then the set of labels is the cyclically ordered multiset 
$\partial P = [z_0^{-1}, c_0, c_1, z_1^{-1}, c_0]$. 
We call a polygon \emph{small} if $\partial S$ is actually a set; i.e. 
if each label appears at most once in $\partial P$.  Notice that 
if $P$ is small, then $\partial P \subseteq S$, so there are 
two cyclic orders on $\partial P$: its intrinsic cyclic order 
and the cyclic order given by $O_\SS$.

There is a special polygon, the \emph{standard $\SS$ polygon}, 
which is the polygon such that $\partial P = O_\SS$, i.e. 
every outgoing edge appears exactly once, and in the cyclic order 
$O_\SS$.  The standard $\SS$ polygon is the largest polygon 
whose boundary is compatibly ordered with $O_\SS$.

\begin{proposition}\label{prop:immersion}
Let $Y$ be a cyclic fatgraph over $\Gamma$ with induced surface map 
$f_Y:Y \to \SS$.  Suppose that every boundary 
component of $Y$ is realized in $\SS$ by a geodesic loop (is not 
finite order or parabolic).  If every polygon $P$ in $Y$ has the 
property that $P$ is small and the cyclic order on $\partial P$ 
is compatible with $O_\SS$, then $f_Y$ is homotopic to 
an immersion with geodesic boundary.
\end{proposition}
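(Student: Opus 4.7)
The plan is to lift $f_Y$ to a $\pi_1(Y)$-equivariant map $\widetilde{f}_Y:\widetilde{Y}\to\H^2$ and modify it by an equivariant homotopy to a local embedding, then straighten the image of $\partial Y$ onto its geodesic representative. The crucial structural fact established in the preceding subsection is that $\widetilde{G}_\SS$ is a tree embedded in $\H^2$: around each lift $\widetilde p$ of the central vertex $p$, the incident edges of $\widetilde G_\SS$ carry the labels of $S$ in the cyclic order $O_\SS$; each lift of $z_i$ has exactly one incoming and one outgoing edge; and each lift $\widetilde c_j$ of a cone point has exactly $o_j$ incident edges. The idea is that each lift of a piece of $Y$ can be homotoped to embed into a specific region of $\H^2$ determined by its type, with the cyclic-order hypothesis on polygons being exactly what guarantees local injectivity at the polygon vertices.

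Rectangles and group polygons are handled directly. A lift of $r(z_i)$ is homotoped to an embedded strip in a regular neighborhood of the corresponding $z_i$-labeled edge of $\widetilde G_\SS$, with its short edges $re(z_i)$ and $re(z_i^{-1})$ on the two sides of that edge; the strip crosses the edge transversely once. A lift of $g(c_j)$ is homotoped to an embedded $o_j$-gon in a small neighborhood of the corresponding lift $\widetilde c_j$, where its $o_j$ sides labeled $c_j$ wrap once around $\widetilde c_j$ using the $o_j$-fold branching of $\widetilde G_\SS$ at $\widetilde c_j$ and its $o_j$ edges $ge(c_j)$ lie along the incident edges of $\widetilde G_\SS$. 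These placements depend only on the abstract piece in $Y$ and so can be done equivariantly.

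The heart of the argument is the polygon step. A lift $\widetilde P$ of a polygon $P$ is mapped so that its polygon vertex lies at some lift $\widetilde p$; by the previous step, each edge of $\partial\widetilde P$ has already been placed as a segment on the outer boundary of an adjacent rectangle or group polygon piece, positioned along a definite edge of $\widetilde G_\SS$ incident to $\widetilde p$ whose label matches the label of the edge of $\widetilde P$. Reading $\partial\widetilde P$ cyclically yields the cyclic sequence $\partial P$; meanwhile, the labels of the adjacent edges of $\widetilde G_\SS$ around $\widetilde p$ occur in the cyclic order $O_\SS$. Smallness says $\partial P$ is a subset of $S$ with no repeats, and compatibility says its intrinsic cyclic order agrees with the restriction of $O_\SS$. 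Consequently there is an embedded closed disk in a small neighborhood of $\widetilde p$ in $\H^2$ whose boundary meets the prescribed edge-segments in the correct cyclic order; $\widetilde P$ is homotoped rel $\partial \widetilde P$ onto this disk. Performing this construction equivariantly across all polygon lifts gives a local embedding of $\widetilde Y$ into $\H^2$, which descends to an immersion of $Y$ into $\SS$. A final collar homotopy replaces each boundary component with its geodesic representative, using the assumption that each boundary loop is hyperbolic. The main obstacle is the polygon step: without smallness, two edges of $\widetilde P$ could demand the same edge-segment, and without cyclic-order compatibility, the boundary of $\widetilde P$ would be forced to visit the edge-segments around $\widetilde p$ in an order inconsistent with their arrangement in $\widetilde G_\SS$, producing a fold; ruling out both possibilities is exactly the content of the two hypotheses.
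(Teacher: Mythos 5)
Your combinatorial core is the same as the paper's: both arguments reduce to the observation that smallness and cyclic-order compatibility of the polygons force the universal cover of the spine of $Y$ to embed in the covering tree $\widetilde{G}_\SS$ in a way that preserves the cyclic order of edges at every vertex (your rectangle, group-polygon, and polygon steps correspond exactly to the paper's case analysis of vertex stars). That part of your argument is correct and is the heart of the matter.

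The gap is your last sentence. Having placed $\widetilde{Y}$ as a local embedding in a neighborhood of the tree, you obtain an immersion of $Y$ into $\SS$ whose boundary is some curve following the tree, and you then assert that ``a final collar homotopy replaces each boundary component with its geodesic representative.'' This is not automatic, and it is precisely the nontrivial step: homotoping a non-geodesic boundary curve onto its geodesic representative can introduce folds in the collar, and in general an immersion whose boundary is merely freely homotopic to a geodesic need \emph{not} be homotopic to an immersion with geodesic boundary (indeed, the whole point of the notion of (virtually) bounding an immersed surface in this setting is that the geodesic-boundary requirement is a genuine restriction, detected by the rotation quasimorphism). The paper closes this gap with a pleated-surface argument: give $Y$ a hyperbolic structure with geodesic boundary, take an ideal triangulation spun around the boundary geodesics, observe that each ideal triangle maps under the spine map to an infinite tripod whose three ideal endpoints are correctly cyclically ordered (this is where the tree embedding and order preservation are used), and straighten each triangle onto the ideal geodesic triangle with those endpoints. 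Because every straightened triangle is positively oriented, the result is an immersion, and because the triangulation is spun around the boundary, the boundary is automatically geodesic. You need some version of this straightening (or another argument that your specific boundary curves admit an immersed straightening collar) to finish the proof.
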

\begin{remark}
Cyclic orders are useful for many geometric things; 
see \cite{C_foliations}, Chapter~2 and \cite{C_scl}, Section~4.2.5.
Proposition~\ref{prop:immersion} is essentially 
a generalization of the ideas in \cite{C_scl}, Section~4.2.5 to orbifolds.
\end{remark}

\begin{remark}
Consider Figure~\ref{fig:spine}, the spine of a fatgraph and
Figure~\ref{fig:core_graph_covering_tree}, the covering tree of the 
core graph of a realization.  The map $f_Y$ retracts the fatgraph to 
the spine and then sends the spine inside the core graph.  In the universal 
cover, then, the covering map $\widetilde{f_Y}$ sends the universal cover of the spine 
inside the covering tree of the core graph.  The hypothesis 
of Proposition~\ref{prop:immersion} makes sure that this covering 
map preserves the cyclic orders of the edges around each vertex and is 
thus an embedding.
Since the universal cover of the fatgraph retracts to the spine and the spine is embedded, 
Proposition~\ref{prop:immersion} is quite natural.  The 
proof is a formalization of this.
\end{remark}

\begin{proof}[Proof of Proposition~\ref{prop:immersion}]
In order to prove that $f_Y$ is homotopic to an immersion, we must show that 
the lift $\widetilde{f_Y} : \widetilde{Y} \to \widetilde{\SS}$ is 
homotopic to an immersion, where the homotopy of $\widetilde{f_Y}$ must 
be equivariant with respect to $\pi_1(Y)$.

Write $\widetilde{f_Y} = g\circ h$, where $h$ is the deformation 
$\widetilde{Y} \to \widetilde{G_Y}$, and $g$ is the simplicial 
graph map $\widetilde{G_Y} \to \widetilde{G_\SS}$.  
Let us consider what happens to the stars of the vertices 
under the graph map $g$.  There are three kinds of vertices in $\widetilde{G_Y}$, 
covering rectangle, group polygon, and polygon vertices.  
The stars of rectangle vertices are $2$-valent and map to the $2$-valent stars of the 
$z_i$ vertices in $\widetilde{G_\SS}$.  A group polygon 
vertex $v$ corresponding to $c_j$ has valence $o_j$ and maps to a 
vertex $w$ covering a torsion vertex $c_j$
of the core graph.  The vertices $v$ and $w$ each have $o_j$ incoming edges, and 
because the star of $v$ covers the $1$-valent star of the projection of $w$ with 
degree $o_j$, the $o_j$-valent star of $v$ is identified with the $o_j$-valent 
star of $w$.  This uses the fact that the graphs are embedded in the surfaces 
$Y$ and $\SS$; there is no angle structure on an abstract graph, 
but there is for the graphs $G_Y$ and $G_\SS$, so we know 
how the cone points are covered.

Finally, consider the star of a polygon vertex $v$ in $\widetilde{G_Y}$, 
which maps to a vertex $w$ covering $p$ in $G_\SS$.  
By assumption, the multiset of incident edge labels at $v$ is a subset of 
the incident edges at $w$ (the polygon is small), and they are 
compatibly cyclically ordered (the cyclic order on the polygon is 
compatible with $O_\SS$).  Therefore, the star of $v$ is embedded 
in the star of $w$.

We conclude that the map $g$ embeds the tree $\widetilde{G_Y}$ 
inside the tree $\widetilde{G_\SS}$ in a way that 
preserves the cyclic order on every vertex.  
If the reader is familiar with pleated surfaces (see \cite{Thurston}), 
it is enough now to note that this fact about graphs implies that the 
pleated surface representative of the map $f_Y$ has only positive 
simplices and is therefore an immersion.  If not, we explain.  
Give $Y$ a hyperbolic structure with geodesic boundary and 
decompose it into ideal triangles (which will necessarily be spun 
around some closed geodesics in $Y$).  These ideal triangles 
lift to ideal triangles in the universal cover $\widetilde{Y}$.
Because $h$ is a deformation retraction to a tree, 
the image of an ideal triangle $T$ under $h$ is an infinite tripod $h(T)$, 
and because $g$ embeds the tree $\widetilde{G_Y}$ inside 
$\widetilde{G_\SS}$, 
the image of $T$ under $g\circ h = \widetilde{f_Y}$ 
is an infinite tripod $g(h(T))$.   Now, $g$ sends the three ideal 
points at the ends of $h(T)$ to the three ideal points at the ends of $g(h(T))$.  
And because $g$ preserves the cyclic order on every vertex, the 
cyclic orders on these triples of points are the same.  

Therefore, the image of $T$ under $\widetilde{f_Y}$ is an infinite 
tripod whose ends have the same cyclic order as the ends of $T$.  There is a 
geodesic ideal triangle $T'$ in $\widetilde{\SS}$ 
with the same ends as $\widetilde{f_Y}(T)$, and we can homotope $\widetilde{f_Y}$ 
on $T$ to map $T$ to $T'$.  Because the order on the ends is preserved, 
this map is orientation-preserving.

Do this homotopy on lifts of each ideal triangle in $Y$, and extend the 
homotopy equivariantly over $\widetilde{Y}$.  The result is an 
equivariant homotopy of $\widetilde{f_Y}$ to a map which takes 
ideal triangles to ideal triangles in an orientation-preserving 
way; that is, it is an immersion taking geodesic boundary to 
geodesic boundary.

\end{proof}

\begin{example}
While Proposition~\ref{prop:immersion} may seem technical, it 
is straightforward to apply in practice.  
Consider the orbifold $\SS$ from Example~\ref{ex:fundamental_domain} 
with fundamental group $\Gamma$.
Figure~\ref{fig:spine} shows a cyclic fatgraph $Y$
over $\Gamma$.  A simple check at the polygons shows that 
the cyclic orders are $[c_1, z_0, c_0, z_0^{-1}]$, $[c_1, z_0, z_0^{-1}]$, 
$[c_0, z_0, z_1^{-1}]$, $[c_1,z_0^{-1}]$, $[c_1,z_1]$, $[c_0,z_1^{-1}]$,
and $[c_0,z_1]$, which are all compatible with $O_\SS$.  Thus, the 
map $f_Y$ can be straightened to 
an immersion with geodesic boundary, and in particular, 
the boundary loops bound an immersed surface.
\end{example}

\subsection{Building immersed surfaces}
\label{sec:building_immersed_surfaces}

Given a fatgraph, it is easy to check using 
Proposition~\ref{prop:immersion} whether the induced surface 
map is homotopic to an immersion with geodesic boundary.  
Our goal in this paper is to build fatgraphs 
(1) which are homotopic to immersions with geodesic boundary and 
(2) which have some given word in the generators as a boundary.
In this section, we show that relaxing either of this 
conditions makes the problem trivial.  This section is 
mainly background and introduction to the methods we will 
use later.

\subsubsection{Cyclic fatgraphs which satisfy Proposition~\ref{prop:immersion}}
\label{sec:construction_satisfy_immersion}

Constructing fatgraphs which satisfy the hypotheses of 
Proposition~\ref{prop:immersion} is quite straightforward.  The proposition
requires that the fatgraph be built using only small polygons 
whose intrinsic cyclic order on edges is compatible with $O_\SS$.
So if we simply enumerate all possible polygons satisfying this hypothesis, and 
all rectangles and group polygons, then we can take any subset of these pieces 
such that each edge occurs the same number of times in polygons 
as it does in rectangles and group polygons and then glue these pieces 
together arbitrarily.

\subsubsection{Cyclic fatgraphs over $\Gamma$ with given boundary $w$}
\label{sec:construction_boundary}

The boundary $\gamma$ of $Y$ has a simplicial structure, 
and every (oriented) $1$-simplex in $\gamma$ is labeled by a generator 
inherited from the labels on the pieces of $Y$.  
If we simply read off the labels as we follow $\gamma$, that tells us 
the image word $f_Y(\gamma)$ in $\Gamma$.

\begin{figure}[ht]
\labellist
\small\hair 2pt
 \pinlabel {$c_0$} at 91 77
 \pinlabel {$c_0$} at 87 108
 \pinlabel {$z_0$} at 89 133
 \pinlabel {$z_0$} at 81 175
 \pinlabel {$z_1^{-1}$} at 13 177
 \pinlabel {$z_0^{-1}$} at -3 125
 \pinlabel {$c_0$} at 6 90
 \pinlabel {$c_0$} at 7 59
 \pinlabel {$z_1$} at 14 3
 \pinlabel {$z_0^{-1}$} at 92 4
 
 \pinlabel {$c_0$} at 159 54
 \pinlabel {$c_0$} at 158 90
 \pinlabel {$z_0$} at 155 119
 \pinlabel {$z_0$} at 161 171
 \pinlabel {$z_1^{-1}$} at 119 172
 \pinlabel {$z_0^{-1}$} at 124 122
 \pinlabel {$c_0$} at 121 91
 \pinlabel {$c_0$} at 121 54
 \pinlabel {$z_1$} at 118 12
 \pinlabel {$z_0^{-1}$} at 160 10
 
 \pinlabel {$c_0$} at 232 90
 \pinlabel {$c_0$} at 230 126
 \pinlabel {$z_0$} at 246 108
 \pinlabel {$z_0$} at 230 39
 \tiny
 \pinlabel {$z_1^{-1}$} at 204 43
 \small
 \pinlabel {$z_0^{-1}$} at 273 119
 \pinlabel {$c_0$} at 195 125
 \pinlabel {$c_0$} at 194 89
 \pinlabel {$z_1$} at 185 19
 \pinlabel {$z_0^{-1}$} at 247 14
 
\endlabellist
\includegraphics{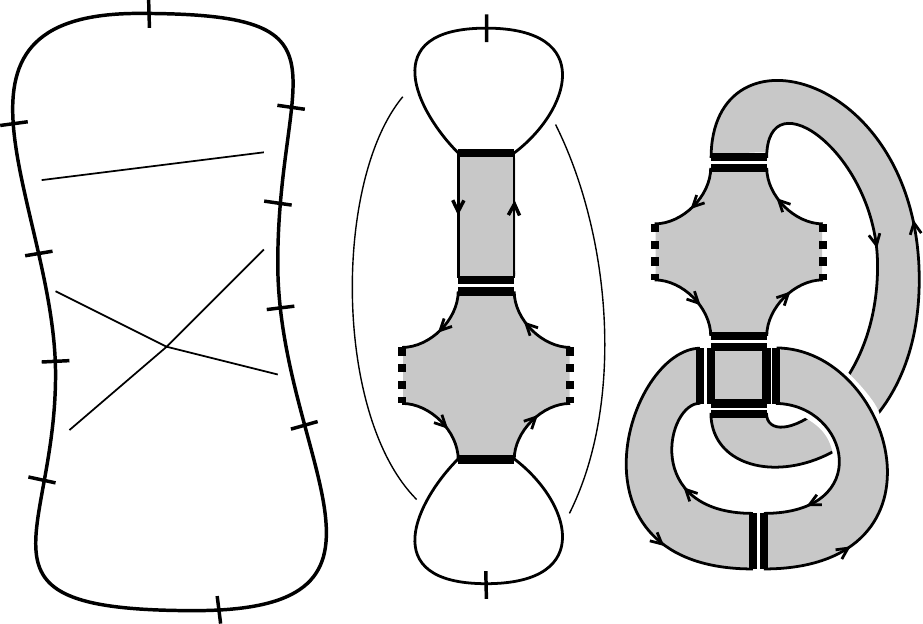}
\caption{To pinch the loop 
$c_0^2 z_0^2z_1^{-1}z_0^{-1}c_0c^2z_1z_0^{-1}$ into 
a cyclic fatgraph, we can pair up the letters arbitrarily 
into rectangles and group polygons.  The group polygon 
has has two group polygon edges which are glued to monogons, which allows 
$c_0^2$ to be part of the boundary.}
\label{fig:pinching}
\end{figure}

To build a surface map into $\SS$ with a desired boundary loop $w$, 
which must of course be homologically trivial,
we can start with an oriented simplicial circle, with each simplex 
labeled by a generator, such that the circle reads off $w$.  Because 
$w$ is homologically trivial, there are as many instances of $z_i$ as 
there are of $z_i^{-1}$, so we can pair them arbitrarily into rectangles.  
Similarly, the $c_j$ must come in groups of $o_j$, so we can group them together 
to form group polygons.  Placing polygons at the junctions of 
the group polygons and rectangles to fill in the holes, 
we produce a cyclic fatgraph $Y$ over $\Gamma$, which comes 
with the surface map $f_Y:Y\to \SS$.  By construction, the boundary 
of $Y$ maps to $w$. This construction yields a surface map bounding any 
homologically trivial word, or words, in $\Gamma$.  See Figure~\ref{fig:pinching}.

\subsubsection{Summary} We have seen that building a fatgraph 
satisfying Proposition~\ref{prop:immersion} is easy, and 
building a fatgraph with a given boundary is easy.  
However, the methods to accomplish each goal are very different, and 
note that if we glue pieces as in 
Section~\ref{sec:construction_satisfy_immersion}, it is quite 
difficult to control what the boundary is, and if we pinch a boundary 
loop as in Section~\ref{sec:construction_boundary}, it is quite 
difficult to control what polygons appear.

The proofs of 
Theorems~\ref{thm:disk_orbifolds} and~\ref{thm:genus_orbifolds} 
use Section~\ref{sec:construction_satisfy_immersion}, but done 
carefully in a way which controls the boundary.

\section{Stability}
\label{sec:stability}

\subsection{Disk orbifolds}

In this section, we prove our main result, 
which says that, under some conditions, the product of any word in $\Gamma$ 
with a sufficiently high multiple of the boundary word of $\SS$
is a loop which bounds an immersed surface with geodesic boundary in $\SS$.
First, we state and prove the 
version for orbifolds whose underlying topological space 
is a disk.

\begin{theorem}\label{thm:disk_orbifolds}
Let $\SS$ be a hyperbolic orbifold whose underlying topological space is a disk and 
which has at least three orbifold points, of orders $\{o_j\}_{j=0}^{J-1}$. 
Let $w \in \Gamma = \pi_1(\SS)$ be any hyperbolic element, and let $b \in \Gamma$ be the 
boundary loop of $\SS$.  Then there exists $N \in \N$ so that 
for all $n\ge 0$, the loop $wb^{N+ng}$ virtually 
bounds an immersed surface, where $g = \gcd(o_0-1, \ldots, o_{J-1}-1)$.
\end{theorem}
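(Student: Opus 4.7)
The plan is to construct, for each $n \ge 0$, an explicit cyclic fatgraph $Y_n$ over $\Gamma$ whose boundary consists of $k$ parallel copies of the loop representing $wb^{N+ng}$ (for fixed $k$ and $N$, independent of $n$) and in which every polygon is small with cyclic order compatible with $O_\SS$. By Proposition~\ref{prop:immersion}, the induced fatgraph map $f_{Y_n}$ is then homotopic to an immersion with geodesic boundary, and this witnesses that $wb^{N+ng}$ virtually bounds an immersed surface.

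Since $\SS$ is a disk orbifold, the generating set of $\Gamma$ consists only of the finite-order generators $c_0, \dots, c_{J-1}$, and after relabeling we may take $O_\SS = [c_0, c_1, \dots, c_{J-1}]$ and $b = c_0 c_1 \cdots c_{J-1}$. In particular, $Y_n$ contains no rectangles. Every subset of $\{c_0, \dots, c_{J-1}\}$ inherits a compatible cyclic order from $O_\SS$, so a polygon with no repeated labels is automatically compatibly ordered, and the hypothesis of Proposition~\ref{prop:immersion} reduces to a combinatorial counting problem. Let $a_j$ be the count of $c_j$ in $w$ and set $k = \mathrm{lcm}(o_0, \dots, o_{J-1})$, so that the number of $c_j$ letters on the boundary of $Y_n$, namely $k(a_j + N + ng)$, is divisible by $o_j$ for all $n$. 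Setting $G_j = k(a_j + N + ng)/o_j$, the fatgraph must contain exactly $G_j$ group polygons of type $c_j$, and their $(o_j - 1) G_j$ interior group-polygon edges of type $c_j$ must be absorbed by small polygons, yielding the constraint $\sum_{S \ni c_j} P_S = (o_j - 1) G_j$ for each $j$, where $P_S$ counts small polygons with label set $S \subseteq \{c_0, \dots, c_{J-1}\}$.

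I plan to build $Y_n$ iteratively as the union of a base fatgraph $Y_0$ and $n$ copies of a fixed \emph{increment block} $B$. The base $Y_0$ will bound $k$ copies of $wb^N$ for $N$ large enough that the finite word $w$ can be absorbed by a bounded pinching region (in the spirit of Section~\ref{sec:construction_boundary}), using the extra copies of $b$ as buffer to guarantee that the resulting polygons are all small and compatibly ordered. The block $B$ will glue to $Y_n$ along $k$ common boundary arcs and enlarge the boundary of each of the $k$ loops by $b^g$, producing $Y_{n+1}$. Internally, $B$ contains $kg/o_j$ group polygons of type $c_j$ (an integer because $o_j \mid k$), and their $(o_j - 1) \cdot kg/o_j$ interior edges are filled by small polygons chosen as cyclic subsequences of $[c_0, \dots, c_{J-1}]$. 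The gcd hypothesis $g = \gcd(o_0 - 1, \dots, o_{J-1} - 1)$ enters here as the minimal stride for which the simultaneous linear system $\sum_{S \ni c_j} P_S = (o_j - 1) kg/o_j$ admits a nonnegative integer solution whose small-polygon supports can actually be assembled into a planar gluing; a Bezout decomposition $g = \sum_i \lambda_i (o_i - 1)$ provides an explicit recipe for mixing the small-polygon shapes in $B$.

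The main obstacle is the explicit construction of $B$: one must exhibit a planar cyclic fatgraph with two matching boundary arcs each reading $k$ copies of $b^g$, whose internal polygons are all small and compatibly ordered with $O_\SS$, and which glues correctly to $Y_n$ and to itself. The gcd $g$ enters naturally because each group polygon of type $c_j$ contributes an integer multiple of $o_j$ to the boundary count of $c_j$, and balancing these contributions simultaneously across all $j$ forces the stride to lie in the ideal generated by $o_0 - 1, \dots, o_{J-1} - 1$. Once $B$ and $Y_0$ are built, iterating $B$ and applying Proposition~\ref{prop:immersion} to $Y_n = Y_0 \cup B^n$ completes the proof.
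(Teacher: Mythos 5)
Your overall architecture — use only polygons that are intervals of $O_\SS$, take $\mathrm{lcm}(o_0,\dots,o_{J-1})$ parallel copies so that the group polygons can be closed up, and then increment the power of $b$ — is the same as the paper's (the lcm device is exactly the paper's ``covering trick''). But there are two genuine gaps. First, the claim that ``a polygon with no repeated labels is automatically compatibly ordered'' is false: for $J\ge 3$ the triangle with intrinsic boundary $[c_0,c_2,c_1]$ is small but incompatible with $O_\SS=[c_0,c_1,c_2]$. Consequently the problem does not reduce to the counting system $\sum_{S\ni c_j}P_S=(o_j-1)G_j$; those divisibility conditions are necessary but nowhere near sufficient, and the entire content of the theorem lies in the planar assembly you defer. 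In particular your base case $Y_0$, built by ``pinching in the spirit of Section~\ref{sec:construction_boundary} with $b$-buffers,'' is not controlled: pinching gives no control over which junction polygons appear, and padding with copies of $b$ does not by itself make the polygons created by pairing up the letters of $w$ compatible. The paper's Step~1 handles this by a specific construction: each maximal run $c_j^{e_k}$ of $w$ is completed to its own group polygon, standard $\SS$ polygons are hung on the new free edges, and consecutive runs are joined by the interval of $O_\SS$ between their generators, so that every polygon is an interval by fiat and the extra boundary is forced to read as powers of $b$.

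Second, and more seriously, the increment block $B$ adding exactly $b^{g}$ per sheet is asserted rather than constructed, and your explanation of where $g$ comes from is not correct. The linear system above admits nonnegative solutions for any nonnegative right-hand side (support it on singletons), so it imposes no congruence and cannot be ``the minimal stride.'' The actual mechanism is local: the only move that extends the boundary by copies of $b$ while keeping all polygons compatible is to cap a free edge $pe(c_j)$ with a $c_j$-group polygon whose remaining $o_j-1$ edges carry standard $\SS$ polygons; this adds exactly $b^{\,o_j-1}$, and the newly created standard polygons have every edge free \emph{except} $pe(c_j)$, so the same index cannot be used twice in succession. Achievable increments are therefore sums $\sum_k(o_{i_k}-1)$ with consecutive indices distinct, and one needs a number-theoretic lemma (the paper's Lemma~\ref{lem:number_theory}) showing that all sufficiently large integers congruent to a fixed residue mod $g$ admit such a representation. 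A Bezout identity $g=\sum_i\lambda_i(o_i-1)$ does not provide this, since its coefficients are generally negative and cannot be realized by attaching pieces; this is also why a single fixed block of increment exactly $g$ is not what the paper produces, and why the hypothesis of at least three orbifold points (needed in Lemma~\ref{lem:number_theory}) appears in the statement at all — a constraint your proposal never uses.
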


\begin{remark}
As mentioned in the introduction, the boundary of the orbifold 
is actually associated to a conjugacy class in $\Gamma$, not a specific word, 
so taking $wb^{N+ng}$ is not well-defined.  However, 
if we can prove that the theorem holds for any specific representative word 
in the conjugacy class, then the theorem holds for \emph{any} 
representative, because the effect of conjugating $b$ is actually 
just to change the word $w$.
\end{remark}

\begin{proof}

As discussed in Section~\ref{sec:building_immersed_surfaces}, 
our strategy will be to carefully piece together a fatgraph 
using only polygons allowed by Proposition~\ref{prop:immersion}
In this proof, a \emph{partial fatgraph} will be a fatgraph 
with some edges left unattached.  That is, $2$-complex 
whose cells are fatgraph pieces and whose boundary, an oriented 
simplicial $1$-complex, is allowed to contain $1$-simplices 
which are fatgraph edges.

In the first step, we build a partial fatgraph 
whose boundary contains the desired word $w$, plus 
some unglued edges.  In the second 
step, we describe how taking multiple 
copies of the partial fatgraph allows us to glue 
the unglued edges to complete the fatgraph in such a way 
the there is an integer $m$ so that the boundary is multiple loops 
of the form $(wb^m)^k$ for integers $k$.
This shows that $wb^m$ virtually bounds an immersed surface 
for some $m$.
In the final step, we describe how to vary the power $m$ to 
arrive at the result.

Let $w = w_0\cdots w_m$.
Without loss of generality, we can assume that $w$ has no cyclic 
cancellation with $b$.  
For if it does, we can 
prepend and append copies of $b$ to $w$ until 
there is no longer cyclic cancellation ($b$ has no cancellation 
with itself), and take $w$ to be this word.  These extra copies of 
$b$ are subsumed into the $N$ in the theorem.

Recall that the \emph{standard $\SS$ polygon} is the polygon $P$
such that $\partial P = O_\SS$.  Because the underlying space of 
$\SS$ is a disk, the boundary word is the product of the finite-order 
generators.  By relabeling, we may assume without 
loss of generality that $b = c_0c_1\cdots c_{J-1}$; 
this implies that the standard $\SS$ polygon has 
(cyclically ordered) boundary $[c_0,c_1,\cdots,c_{J-1}]$.  
This relabeling doesn't affect the proof, but it is simpler to think about.
Note that when we start to add infinite-order generators, 
it will \emph{not} be true that the arrangement of generators 
around the standard $\SS$ polygon has the same cyclic order 
as the boundary word.

{\bf Step 1:}
Start with a horizontal polygonal line 
oriented to the left and labeled by $w$, so the 
leftmost simplex is labeled by the final letter in $w$.
Break $w$ into runs of a single generator.  Since 
$w$ is reduced, any run of $c_j$ will have length 
less than $o_j$.  Let $W_1, \ldots, W_K$ 
denote these runs.  For each run $W_k$, which will be 
of the form $W_k = c_j^{e_k}$, build a 
group polygon which is labeled on top by $W_k$, and 
on the bottom by $o_j-e_k$ copies of $c_j$.  This group polygon
has $e_k-1$ edges on top in between letters in $W_k$; 
these will be glued to monogons so that $c_j^{e_k}$ appears 
on the boundary.  There are 
two edges on the left and right between the ends of $W_k$ 
and the first and last new copies of $c_j$.  These will 
be glued to other parts of the cyclic fatgraph.  
Finally, there are $o_j-e_k-1$ edges on the bottom in between 
the new copies of $c_j$.  Onto all of these edges, we attach a copy of 
the standard $\SS$ polygon.  These polygons have many edges 
remaining unglued, and we will return to them later.  
See Figure~\ref{fig:W_k_partial_fatgraphs}.

\begin{figure}[ht]
\labellist
\small\hair 2pt
 \pinlabel {$c_0$} at 195 54
 \pinlabel {$c_1$} at 153 59
 \pinlabel {$c_1$} at 123 59
 \pinlabel {$c_2$} at 81 54
 \pinlabel {$c_1$} at 24 54
 \pinlabel {$c_1$} at 8 26
 \pinlabel {$c_1$} at 39 26
 \pinlabel {$c_2$} at 55 28
 \pinlabel {$c_2$} at 80 10
 \pinlabel {$c_2$} at 104 27
 \pinlabel {$c_1$} at 137 28
 \pinlabel {$c_0$} at 176 26
 \pinlabel {$c_0$} at 210 23
 
 \pinlabel {$c_2$} at 15 5
 \pinlabel {$c_0$} at 33 4
 \pinlabel {$c_0$} at 55 16
 \pinlabel {$c_1$} at 64 6
 \pinlabel {$c_0$} at 96 6
 \pinlabel {$c_1$} at 105 16
 \pinlabel {$c_1$} at 186 5
 \pinlabel {$c_2$} at 203 5
\endlabellist
\includegraphics[scale=1.4]{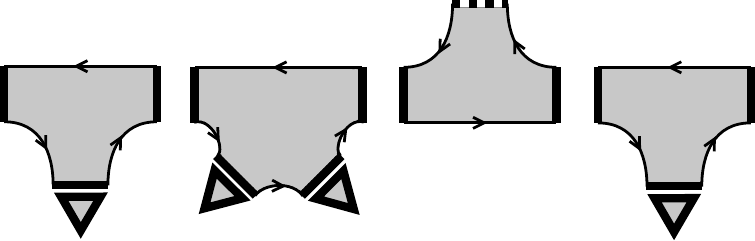}
\caption{Building a partial cyclic fatgraph for every run $W_k$ in 
$w = c_0c_1^2c_2c_1$.  
For this example, we let the generators of $\Gamma$ be 
$c_0, c_1, c_2$ of orders $3$, $3$, and $4$, respectively, and 
we take the cyclic order to be $O_\SS = [c_0, c_1, c_2]$.  
These partial fatgraphs are not glued together yet.
For clarity, we have labeled the unglued polygon edges with a single 
letter, so e.g. $pe(c_j)$ has label $c_j$.}
\label{fig:W_k_partial_fatgraphs}
\end{figure}

We have one partial fatgraph for every run $W_k$, 
and now we will insert a polygon between successive $W_k$ to 
glue them together.  The key is to do this in a way that 
only uses polygons allowed by Proposition~\ref{prop:immersion} 
and will also allow us to ensure that the boundary other than $w$ is 
copies of $b$.
To glue $W_k$ to $W_{k+1}$ (recall $W_k$ is to the 
right of $W_{k+1}$), suppose that $W_k$ is a run 
of generator $c_j$ and $W_{k+1}$ is a run of generator $c_l$.  
Let $P_k$ be the polygon whose boundary $\partial P_k$ 
is the interval in $O_\SS$ between $c_l$ and $c_j$.  
That is, $\partial P_k$ 
is a piece of the cyclically ordered set $S$, and $P_k$ 
is simply the standard $\SS$ polygon with some edges removed 
so that $c_j$ is followed by $c_l$.  If $c_l$ is immediately 
followed by $c_j$ in $O_\SS$, then $P_k$ will be a bigon.  

Now glue every $W_k$ to $W_{k+1}$ using $P_k$ in the middle. 
The result is a partial cyclic fatgraph, and observe that 
along the top, we have the word $w$.
On the far left and right ends, there remain two 
unglued group polygon edges.  On the left, build the 
polygon $P_K$ whose boundary is the interval in 
$O_\SS$ between $c_0$ and $w_m$; that is, 
the first edge of $P_K$ is $pe(c_0)$, and the 
last edge is $pe(w_m)$.  
For example, if $c_0 = w_m$, then $P_K$ will be degenerate (a monogon).
If $c_0$ follows $w_m$ in $O_\SS$, then $P_K$ will be a bigon, and so on.
Similarly, build the polygon $P_0$ whose boundary is the 
interval in $O_\SS$ between $w_0$ and $c_{J-1}$.  
Glue $P_K$ and $P_0$ on the left and right, respectively.  
Call the resulting partial fatgraph $Y'$.
See Figure~\ref{fig:W_k_glued_up}.

\begin{figure}[ht]                                                                                                                                       
\labellist                                                                                                                                                
\small\hair 2pt                                                                                                                                           
 \pinlabel {$c_0$} at 211 54
 \pinlabel {$c_1$} at 165 61                                                                                                                            
 \pinlabel {$c_1$} at 128 61                                                                                                                            
 \pinlabel {$c_2$} at 76 55
 \pinlabel {$c_1$} at 26 54
 
 \pinlabel {$c_0$} at -4 42                                                                                                                              
 \pinlabel {$c_1$} at 9 26                                                                                                                              
 \pinlabel {$c_2$} at 19 6                                                                                                                              
 \pinlabel {$c_0$} at 34 5                                                                                                                              
 \pinlabel {$c_1$} at 42 26                                                                                                                             
 \pinlabel {$c_2$} at 53 28                                                                                                                             
 \pinlabel {$c_0$} at 50 14                                                                                                                             
 \pinlabel {$c_1$} at 60 5.5                                                                                                                              
 \pinlabel {$c_2$} at 76 10.5                                                                                                                             
 \pinlabel {$c_0$} at 90 6.5                                                                                                                              
 \pinlabel {$c_1$} at 101 16                                                                                                                            
 \pinlabel {$c_2$} at 101 30                                                                                                                             
 \pinlabel {$c_0$} at 114 30                                                                                                                            
 \pinlabel {$c_1$} at 145 31                                                                                                                            
 \pinlabel {$c_2$} at 178 30                                                                                                                            
 \pinlabel {$c_0$} at 194 25                                                                                                                            
 \pinlabel {$c_1$} at 202 5
 \pinlabel {$c_2$} at 218 4
 \pinlabel {$c_0$} at 226 26
 \pinlabel {$c_1$} at 243 34
 \pinlabel {$c_2$} at 243 49
\endlabellist
\includegraphics[scale=1.35]{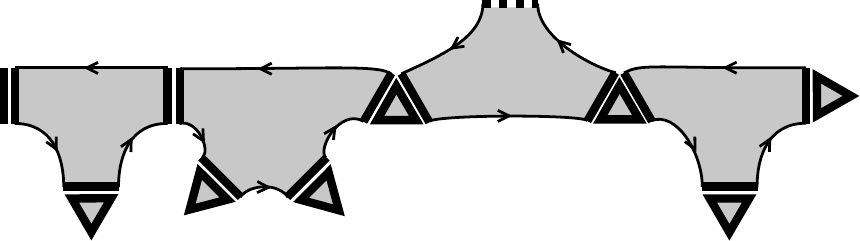}
\caption{The partial cyclic fatgraph $Y'$.  Again 
$pe(c_j)$ is labeled by $c_j$.}
\label{fig:W_k_glued_up}
\end{figure}

{\bf Step 2 (the covering trick):} At this point, we have the fatgraph $Y'$, 
which is composed of group polygons attached 
around polygons, with some of the polygon edges unglued.  
Also, all of the polygons have boundary which is an interval 
in the cyclic order $O_\SS$, so in particular all the polygons are 
small and have boundary cyclic order compatible with $O_\SS$.
In this step, we show how taking multiple copies of $Y'$ 
makes it possible to glue up all the unglued edges.

Consider what the boundary of $Y'$ is.  It is 
an oriented simplicial $1$-complex whose simplices 
are either labeled sides of group polygons  
or unglued edges from polygons.  
We claim the following (see Figure~\ref{fig:W_k_glued_up}):

\begin{lemma}\label{lem:boundary_with_unglued}
If we read each polygon edge $pe(c_j)$ 
as the generator $c_j$, then $\partial Y' = wb^m$ for some $m$.
\end{lemma}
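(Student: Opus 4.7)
The plan is to trace $\partial Y'$ counterclockwise and verify that the cyclic word it spells is $wb^m$. The top of $Y'$ immediately reads $w$: the top labeled sides of each $G_k$ spell the run $W_k$ (the $e_k - 1$ interior top edges of $G_k$ are sealed off by monogons, hence interior in $Y'$), and consecutive runs meet at the single top vertex of each connecting polygon $P_k$. So the argument reduces to showing that the rest of the boundary---call it the \emph{bottom}---reads $b^m$.

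Going counterclockwise, the bottom decomposes left to right as: $P_K$'s unglued edges, the bottom of $G_K$, $P_{K-1}$'s unglued edges, and so on down to the bottom of $G_1$, then $P_0$'s unglued edges. Writing $w_0 = c_{i_0}$ and $w_m = c_{i_m}$, and interpreting each $pe(c_i)$ as $c_i$, I will verify that (i) the first bottom label is $c_0$, (ii) the last bottom label is $c_{J-1}$, and (iii) every consecutive pair of bottom labels is cyclically adjacent in $O_\SS = [c_0, \ldots, c_{J-1}]$, i.e., each $c_i$ is followed by $c_{i+1 \bmod J}$. These three properties together force the bottom to spell $(c_0 c_1 \cdots c_{J-1})^m = b^m$ for some $m\ge 0$.

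Properties (i) and (ii) are direct from the constructions of $P_K$ (boundary interval $[c_0, \ldots, c_{i_m}]$, last edge $pe(c_{i_m})$ glued to $G_K$) and $P_0$ (boundary $[c_{i_0}, \ldots, c_{J-1}]$, first edge $pe(c_{i_0})$ glued to $G_1$). For (iii) there are two types of checks. Within any connecting polygon, the unglued edges form a contiguous arc of $O_\SS$, so consecutivity is immediate. Within the bottom of each $G_k$ with $W_k = c_j^{e_k}$, two successive bottom labels $c_j, c_j$ are bridged by a standard $\SS$ polygon glued along $pe(c_j)$, whose remaining $J-1$ edges are encountered counterclockwise as $c_{j+1}, c_{j+2}, \ldots, c_{j-1}$ (indices mod $J$), interpolating $c_j \to c_j$ via the complementary arc of $O_\SS$. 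At each interface between a $P$ and an adjacent $G$, the construction of $P_k$ as the interval from $c_l$ (the generator of the left run) to $c_j$ (generator of the right run) forces the transition to be cyclically consecutive; degenerate cases where some $P_k$ is a bigon or $P_0, P_K$ is a monogon reduce to the same check.

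The main technical point I anticipate is keeping the orientation conventions consistent across pieces, so that the counterclockwise traversal of $\partial Y'$ restricts to a counterclockwise traversal of each piece in the predicted order. Since every gluing in the construction is orientation-reversing on identified edges, this bookkeeping is routine, but it is the input needed before the label verification above can be carried out symbol-by-symbol.
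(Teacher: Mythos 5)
Your proof is correct and takes essentially the same route as the paper's: both arguments reduce to observing that, away from $w$, every boundary label $c_j$ must be followed by the cyclically next label $c_{j+1}$ (because every polygon is an interval of $O_\SS$ and the standard $\SS$ polygon's boundary is the boundary word), together with the endpoint checks coming from the construction of $P_0$ and $P_K$. You simply make the counterclockwise traversal and the adjacency bookkeeping explicit where the paper compresses them into a short ``by construction'' paragraph.
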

\begin{proof}
This lemma is really just by construction.
Recall that the boundary of the standard $\SS$ polygon 
is the same as the boundary word of $\SS$, and every polygon in 
$Y'$ is an interval in $O_\SS$.  
Therefore, for any simplex in $\partial Y'$ which is
not part of $w$ and is labeled $pe(c_j)$ or $c_j$, 
the next simplex must be $pe(c_{j+1})$ or $c_{j+1}$, 
with indices modulo $J$.  If the simplex in $\partial Y'$ 
is part of $w$, then by construction it is a labeled 
side and is followed by the correct next letter.  The 
special cases of the last letter of $w$ and the last 
letter of $b$ before $w$ are also correct by construction.
Therefore, $\partial Y' = wb^m$, as desired.
\end{proof}
Lemma~\ref{lem:boundary_with_unglued} shows that \emph{if} the each 
unglued polygon edge $pe(c_j)$ is read as $c_j$, then the boundary is $wb^m$ 
for some $m$.  But of course this isn't enough --- we need to 
produce a complete fatgraph with real boundary.  
The trick is to take multiple copies of $Y'$ 
and attach group polygons in such a way that the unglued polygon edges 
are effectively replaced by labeled group polygon sides.  We now 
explain this trick.

\begin{figure}[htb]
\begin{center}
\labellist
\small\hair 2pt
 \pinlabel {$pe(c_1)$} at 6 49
 \pinlabel {$pe(c_2)$} at 30 49
 
 \pinlabel {$c_1$} at 60 13
 \pinlabel {$c_1$} at 89 14
 \pinlabel {$c_1$} at 76 -3
 
 \pinlabel {$c_1$} at 167 13
 \pinlabel {$c_1$} at 197 14
 \pinlabel {$c_1$} at 185 -3
 
 \pinlabel {$c_1$} at 212.5 82
 \pinlabel {$c_1$} at 180 104
 \pinlabel {$c_1$} at 178 84
 
 \pinlabel {$c_1$} at 105 83
 \pinlabel {$c_1$} at 72 104
 \pinlabel {$c_1$} at 70 84
 
 \pinlabel {$c_2$} at 59 52
 \pinlabel {$c_2$} at 75 50
 \pinlabel {$c_2$} at 73 61
 \pinlabel {$c_2$} at 113 56
 
 \pinlabel {$c_2$} at 132 52
 \pinlabel {$c_2$} at 148 51
 \pinlabel {$c_2$} at 145 60
 \pinlabel {$c_2$} at 185 56
 
 \pinlabel {$c_2$} at 203 53
 \pinlabel {$c_2$} at 219 51
 \pinlabel {$c_2$} at 231 56
 \pinlabel {$c_2$} at 256 57
\endlabellist
\includegraphics[scale=1.3]{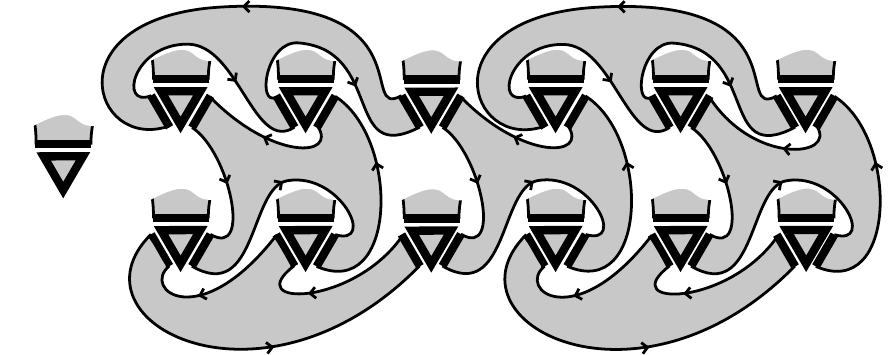}
\caption{Performing the covering trick on part of the partial 
fatgraph $Y'$ in Figure~\ref{fig:W_k_glued_up}.  
We take $12$ copies of $Y'$ and glue group polygons 
within the fibers.  This figure shows what happens to the group polygon on 
the lower right of Figure~\ref{fig:W_k_glued_up}.  The polygon (shown on left) 
is covered by $12$ copies.  If we glue in group polygons, note 
that the boundary has changed from many copies of $\ldots pe(c_1) pe(c_2) \ldots$ 
to many copies of $\ldots c_1 c_2 \ldots$.}
\label{fig:covering_trick}
\end{center}
\end{figure}

Let $L$ be the least common multiple of the $o_j$, and take 
$L$ copies of $Y'$.  Let $Y''$ be these $L$ copies of $Y'$, 
and think of $Y''$ 
as an $L$-sheeted cover of $Y'$.  The fiber over a single 
unglued edge $pe(c_j)$ in $Y'$ is $L$ copies of $pe(c_j)$. 
Attach $L/o_j$ group polygons to the $L$ 
unglued edges in $Y''$ in the fiber over a single unglued edge in $Y'$.
Each group polygon can be glued to arbitrary edges within the fiber.
Do this for every unglued edge in $Y'$.  We have attached 
many group polygons to the partial fatgraph $Y''$.  Call the 
result of attaching these group polygons $Y$.  We 
claim:

\begin{lemma}\label{lem:covering_trick}
The result $Y$ is a complete fatgraph whose boundary maps to $wb^m$, 
covering it $L$ times, and 
contains only small polygons whose boundary is compatible with $O_\SS$.
\end{lemma}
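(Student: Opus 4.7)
The plan is to verify the three conclusions of the lemma --- the small-polygon condition, completeness, and the boundary count --- in turn, by direct bookkeeping against the construction of $Y$. The creative work (the covering trick itself) is already built into the definition of $Y$; what remains is verification, which I expect to be routine, with the only subtlety hiding in the local picture near a glued group polygon edge.

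For the polygon condition, the key observation is that the pieces attached in the covering trick are \emph{group polygons}, not polygons, so every polygon appearing in $Y$ is a lift to the $L$-sheeted cover $Y''$ of a polygon already present in $Y'$. Every polygon of $Y'$ was constructed to be an interval in the cyclic order $O_\SS$: the standard $\SS$ polygons attached beneath the runs $W_k$, the junction polygons $P_k$ between successive runs, the end polygons $P_0$ and $P_K$, and the monogons $pe(c_j)$ separating repeated copies of $c_j$ inside a single run. Any interval in $O_\SS$ is automatically small with intrinsic cyclic order compatible with $O_\SS$, and lifting to a cover preserves both properties.

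For completeness I would count unglued polygon edges fiber by fiber. An unglued $pe(c_j)$ of $Y'$ lifts to $L$ copies in the fiber of $Y''$, and the $L/o_j$ group polygons $g(c_j)$ glued within this fiber contribute exactly $o_j\cdot (L/o_j)=L$ matching edges $ge(c_j)$. Labels match by construction, so every unglued polygon edge of $Y''$ is accounted for, and no new unglued edges are introduced.

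For the boundary count, Lemma~\ref{lem:boundary_with_unglued} already reads $\partial Y' = wb^m$ under the convention $pe(c_j)\leftrightarrow c_j$, and $\partial Y''$ is an $L$-fold cover carrying $L$ copies of this label sequence. The crux is to check what happens when the group polygons are attached. Locally, if a $ge(c_j)$-edge of a group polygon $g(c_j)$ is identified with a $pe(c_j)$ on $\partial Y''$, that simplex becomes interior in $Y$ and is replaced in $\partial Y$ by the labeled side $c_j$ of the group polygon adjacent to it, as in Figure~\ref{fig:covering_trick}. This substitution is label-preserving, so $\partial Y$ is a closed $1$-manifold whose aggregate label sequence equals $L$ copies of $wb^m$, and the induced map $\partial Y \to wb^m$ is therefore a degree-$L$ covering. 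The main obstacle --- and the only step requiring genuine care --- is precisely this local boundary substitution: the freedom of how group polygons are glued within a fiber can merge or split the $L$ original copies of $wb^m$ into different numbers of cycles, but it cannot alter the total label sequence, because the boundary of a group polygon alternates strictly between labeled $c_j$-sides and $ge(c_j)$-edges.
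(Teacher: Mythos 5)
Your proposal is correct and follows essentially the same route as the paper: polygons of $Y$ are exactly (lifts of) polygons of $Y'$, which are intervals in $O_\SS$; the fiberwise count shows every unglued edge gets a group polygon; and the boundary of $Y$ is obtained from the $L$-fold cover of $\partial Y'$ by the label-preserving substitution $pe(c_j)\mapsto c_j$, so it covers $wb^m$ with total degree $L$ even if the individual loops merge. The paper states these points more tersely but identifies the same subtlety about merged boundary components in the remark following the lemma.
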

\begin{proof}
Every polygon in $Y$ is a polygon in $Y'$, which contained only small 
polygons with boundary compatible with $O_\SS$, 
so the last two conclusions are immediate.  
Also, $Y$ is created by gluing group polygons to all the unglued 
edges in $Y''$, so it is a complete fatgraph.  So the 
only question is what the boundary of $Y$ is.  
By Lemma~\ref{lem:boundary_with_unglued}, the boundary 
of $Y'$ is $wb^m$ when we read an unglued edge $pe(c_j)$ 
as the generator $c_j$, so since $Y''$ is an $L$-sheeted cover, 
the same thing is true for each sheet.  Now consider $Y$.  It is 
obtained from $Y''$ by gluing in group polygons within fibers 
over each unglued polygon edge in $Y''$.  So by construction, 
the boundary of $Y$ is obtained from $\partial Y''$ 
by taking the boundary loops in $Y''$ and replacing 
each unglued polygon edge $pe(c_j)$ with a group polygon side 
labeled by $c_j$ 
which runs between sheets of $Y''$.  
Thus $\partial Y$ consists of an $L$-degree cover of the loop $wb^m$.
\end{proof}

\begin{remark}
The $L$-sheeted cover $Y''$ of $Y'$ 
has $L$ separate boundary components.  When we glue on 
group polygons to obtain $Y$, each occurrence of 
an unglued edge $pe(c_j)$ 
is replaced by the labeled side $c_j$, as we desire.  However, 
this labeled side transits between two different sheets of $Y''$.
Therefore, the final boundary of $Y$ will look \emph{locally}
as if we simply replaced $pe(c_j)$ with $c_j$, but 
some of the boundary loops may have been joined together.
The total degree remains $L$.
\end{remark} 

For example, to perform this ``covering trick'' on the partial 
fatgraph $Y'$ in Figure~\ref{fig:W_k_glued_up}, 
we would compute the least common multiple of the 
$o_j$ $3$, $3$, and $4$, which is $12$.  Take 
$12$ copies of the partial fatgraph to get $Y''$.  
Now for any unglued polygon edge in $Y'$, glue group 
polygons to the unglued edges in $Y''$ in the fiber.
See Figure~\ref{fig:covering_trick}.

Because $Y$ contains only polygons 
which are compatible with the cyclic order $O_\SS$, we can apply 
Proposition~\ref{prop:immersion} to show that $f_Y$ is homotopic 
to an immersion with geodesic boundary, so $wb^m$ virtually 
bounds an immersed surface.  This completes Step $2$

{\bf Step 3:}
 
We need more than the fact that $wb^m$ virtually 
bounds an immersed surface: Theorem~\ref{thm:disk_orbifolds} is a stability 
result, and we need to show that there is $N$ such that 
for all $n>0$, we have that $wb^{N+ng}$ virtually bounds an immersed surface, 
where $g = \gcd(o_0-1,\ldots, o_{J-1}-1)$.
In Step $2$, we showed that for \emph{some} $m$, $wb^m$ virtually 
bounds an immersed surface.  
In this step, we show that we can actually achieve any desired 
$m$, as long as it is large enough and $g \mid m$.

Consider again the partial fatgraph $Y'$ from Step $2$.  By 
Lemma~\ref{lem:boundary_with_unglued}, if we read the 
unglued polygon edge $pe(c_j)$ as $c_j$, then the boundary 
of $Y'$ is $wb^m$ for some $m$.  For the current step, we need 
there to be some unglued polygon edge.  This is almost certainly 
the case, but if not, append a copy of $b$ onto $w$, which forces 
some unglued edges.  So without loss of generality, we assume there is 
an unglued polygon edge, and also without loss of generality, 
we assume it is $pe(c_0)$.  We will now re-use notation and 
define a new $Y''$ for this step.  Let $Y''$ be the partial fatgraph 
obtained from $Y'$ by attaching a $c_0$-group polygon onto the 
unglued polygon edge $pe(c_0)$ and attaching standard $\SS$ 
polygons onto all unglued edges of this new group polygon.

Note that we have added $o_0-1$ new polygons, and, reading 
unglued edges $pe(c_j)$ as $c_j$, the boundary of $Y''$ 
is $wb^{m+(o_0-1)}$.  Also note that the newly attached standard $\SS$ 
polygons have every edge unglued \emph{except} 
$c_0$.  So for any $j$ \emph{except} $j=0$, we 
can repeat this procedure to obtain a fatgraph whose boundary is 
$wb^{m+(o_0-1)+(o_j-1)}$ (when unglued edges are read as generators).
See Figure~\ref{fig:adding_group_polygons}.

\begin{figure}[ht]
\labellist
\small\hair 2pt
 \pinlabel {$c_0$} at 212 142
 \pinlabel {$c_1$} at 165 146
 \pinlabel {$c_1$} at 127 147
 \pinlabel {$c_2$} at 77 141
 \pinlabel {$c_1$} at 26 143
 
 \pinlabel {$c_0$} at -4 130
 \pinlabel {$c_1$} at 10 112
 \pinlabel {$c_2$} at 18 92
 \pinlabel {$c_0$} at 33.5 91
 \pinlabel {$c_1$} at 41 110
 \pinlabel {$c_2$} at 52 115
 \pinlabel {$c_0$} at 50.5 101
 \pinlabel {$c_1$} at 60 92
 \pinlabel {$c_2$} at 76 97
 \pinlabel {$c_0$} at 71.5 76
 \pinlabel {$c_1$} at 69.5 42
 \pinlabel {$c_2$} at 84 39
 \pinlabel {$c_0$} at 96 53
 \pinlabel {$c_1$} at 112 30
 \pinlabel {$c_2$} at 125 4
 \pinlabel {$c_0$} at 140 5
 \pinlabel {$c_1$} at 144 26
 \pinlabel {$c_2$} at 161.5 39
 \pinlabel {$c_0$} at 159 51
 \pinlabel {$c_1$} at 136 54
 \pinlabel {$c_2$} at 121 69
 \pinlabel {$c_0$} at 106 81
 \pinlabel {$c_1$} at 101 102
 \pinlabel {$c_2$} at 100 115
 \pinlabel {$c_0$} at 114 116.5
 \pinlabel {$c_1$} at 144 117.5
 \pinlabel {$c_2$} at 178 116
 \pinlabel {$c_0$} at 194 112
 \pinlabel {$c_1$} at 203 90
 \pinlabel {$c_2$} at 216 89
 \pinlabel {$c_0$} at 226 111
 \pinlabel {$c_1$} at 243 120
 \pinlabel {$c_2$} at 243 135
\endlabellist
\includegraphics[scale=1.3]{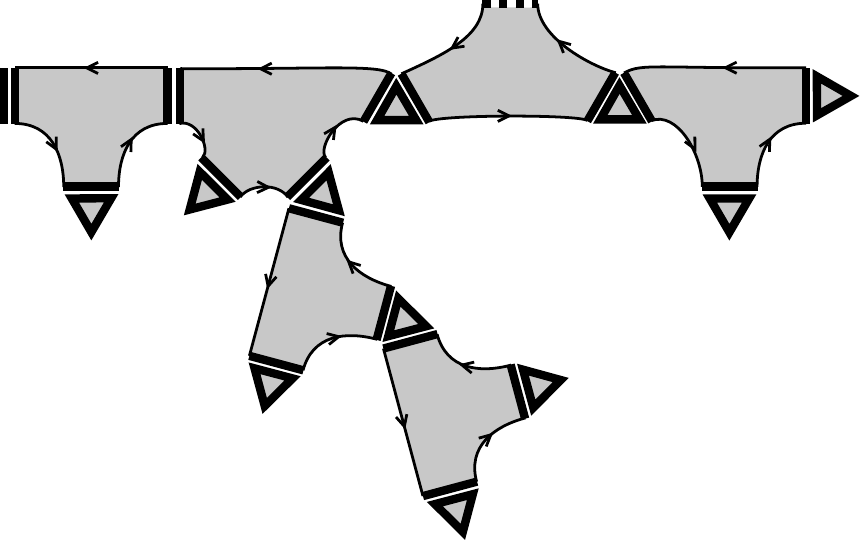}
\caption{Adding group polygons to add copies of $b$ to the boundary.
Notice that upon adding the first group polygon, the newly introduced 
standard $\SS$ polygons have every possible unglued edge \emph{except} $e(c_0, c_0)$, to which 
we attached the first group polygon.}
\label{fig:adding_group_polygons}
\end{figure}

Therefore, by repeating this procedure, 
we can obtain a partial fatgraph whose boundary (with $pe(c_j)$ read as $c_j$) 
is $wb^M$, where $M$ is any integer of the form
$m + \sum_{k=1}^K (o_{i_k}-1)$, where the successive $i_k$ are \emph{distinct}.
By Lemma~\ref{lem:number_theory}, there is some $N$ such that for all $n\ge 0$, 
every integer $N+ng$ is of this form.  Now take this partial 
fatgraph with boundary $wb^M$ and perform Step 2 (the covering trick) 
to get a real, complete fatgraph satisfying Proposition~\ref{prop:immersion}.
This shows that $wb^M$ virtually bounds an immersed surface, and completes 
the proof.
\end{proof}

The following lemma is required by the proof of Theorem~\ref{thm:disk_orbifolds}, 
but it is independently interesting.
\begin{lemma}\label{lem:number_theory}
Given integers $\{x_i\}_{i=1}^k$ with $k\ge 3$ and with $g = \gcd_ix_i$, there is 
some $N \in \N$ so that for all $n \in \N$, there is an integer 
sequence $\{i_j\}_{j=1}^J$ so that $i_j \ne i_{j+1}$ for all $j$ and 
so that $\sum_{j=1}^J x_{i_j} = N + ng$.
\end{lemma}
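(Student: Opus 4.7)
The plan is to show that the set $A$ of achievable sums $\sum_j x_{i_j}$ over finite sequences with $i_j \ne i_{j+1}$ (including the empty sum $0$) contains every sufficiently large multiple of $g$. The lemma then follows by taking $N$ to be the smallest such threshold that is itself a multiple of $g$.

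First I would build a base family of achievable values using the cyclic sequence $B_m = (1,2,\ldots,k,1,2,\ldots,k,\ldots)$ of length $mk$. Since $k \ge 3$, this sequence has no adjacent repeats, and its total is $mS$ where $S = x_1 + \cdots + x_k$. Hence $mS \in A$ for every $m \ge 0$.

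Next I would introduce an \emph{insertion move}. Call a position between two consecutive terms of a valid sequence \emph{safe for $i$} if neither neighbor has index $i$; inserting the letter $i$ at a safe-for-$i$ position yields a valid sequence whose sum has grown by $x_i$. In $B_m$, index $i$ appears $m$ times and each appearance makes at most two gaps unsafe, so the number of safe-for-$i$ gaps is at least $m(k-2)-1$. The crucial point, which I would verify carefully, is that inserting the letter $i$ does not destroy safe-for-$i'$ gaps for any $i' \ne i$: an insertion splits one gap into two new ones, each having the newly inserted letter (of index $i \ne i'$) as one endpoint. Therefore insertions of different indices do not interfere, and I can realize $mS + \sum_j a_j x_j \in A$ for any $a_j$'s with $0 \le a_j \le m(k-2) - 1$.

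Finally I would combine this with the Frobenius-type fact that, since $\gcd(x_1,\ldots,x_k) = g$, every multiple of $g$ at least some $F$ is a nonnegative integer combination $\sum b_j x_j$. Only finitely many multiples of $g$ lie in the window $[F, F+S)$; for each I preselect such a representation, and let $B$ be the maximum coefficient appearing in any of them. Choose $m_0$ with $m_0(k-2) - 1 \ge B$, and take $N$ to be any multiple of $g$ that is at least $m_0 S + F + S$. For any $n \ge 0$, writing $V = N + ng = mS + r$ with $r \in [F, F+S)$ a multiple of $g$ forces $m \ge m_0$, and the preselected representation of $r$ has coefficients at most $B \le m(k-2) - 1$, so inserting into $B_m$ realizes $V$ in $A$. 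The main obstacle I anticipate is the insertion bookkeeping, i.e., confirming that many insertions can be interleaved without destroying each other's safe gaps; the key fact is that insertions of $x_i$ consume only safe-for-$i$ gaps while preserving safe-for-$i'$ gaps for every $i' \ne i$, and this is precisely where the hypothesis $k \ge 3$ is needed.
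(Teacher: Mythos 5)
Your proposal is correct and is essentially the paper's argument in dual form: both start from many repetitions of the full run $1,2,\ldots,k$ (total $mS$) and then tune the sum by a bounded number of single-letter local modifications, with $k\ge 3$ guaranteeing that these modifications never create adjacent equal indices. The only real difference is that you \emph{insert} letters and invoke the Frobenius-type fact for nonnegative representations of multiples of $g$, whereas the paper \emph{deletes} one letter from every other run and works directly from a signed Bezout identity $g=\sum_i a_ix_i$.
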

\begin{proof}
We call the sequence $1, 2, \ldots, k$ a \emph{run}.
The idea in constructing the sequence $i_j$ is to start with a
repeating sequence of runs $1, 2, \ldots, k, 1, 2, \ldots, k, \ldots$ 
and take away a single number from every other run.  The resulting 
list will have distinct adjacent pairs.

We can write $g = \sum_{i=1}^k a_ix_i$.  
Let $s = \sum_{i=1}^kx_i$, and let $M = \max_ia_i$ and $m = \min_ia_i$.
Let $N = 2ks(s/g)(M-m)$.  Now, given $C > N$ such that $g \mid C$, 
write $C = cs + r$, where $r < s$.  Notice that
\begin{itemize}
\item $c \ge 2k(s/g)(M-m)$, and 
\item $g \mid r$ because $g \mid s$, so
\item $r = dg$, where $d < s/g$.
\end{itemize}
Therefore, we can rewrite 
\begin{align*}
C &= cs + dg \\
  &= cs + d\sum_{i=1}^ka_ix_i \\
  &= cs + dM\sum_{i=1}^kx_i - d\sum_{i=1}^k(M-a_i)x_i \\
  &= (c + dM)s - \sum_{i=1}^kd(M-a_i)x_i, 
\end{align*}
where every term $M-a_i$ is non-negative.  Start with a sequence 
of $(c + dM)$ runs.  If we could remove $d(M-a_i)$ copies of $i$ 
from the sequence, for each $i$, then $\sum_jx_{i_j}$ for the resulting 
sequence $\{i_j\}_j$ would have the correct value $C$.  But we 
must be certain that when these copies of $i$ are removed, the adjacent
elements in the sequence remain distinct.  To accomplish this, we 
can remove a single $i$ from every other run.  We need to know that 
we have enough runs available, i.e. we need at least 
$2d(M-a_i) \le 2d(M-m) \le 2(s/g)(M-m)$ runs for every $i$.  
But from our bound on $c$, we have at least $c \ge 2k(s/g)(M-m)$ runs, so 
we can remove the indices as desired, and the proof is complete.
\end{proof}

\begin{remark}
\label{rem:generalization_of_CL}
Theorem~\ref{thm:disk_orbifolds} generalizes \cite{CL}, Theorem~3.1.
The situation of interest in \cite{CL} is $(2,p,\infty)$ orbifolds, 
which have only two orbifold points, and as stated, 
Theorem~\ref{thm:disk_orbifolds} requires $3$ orbifold points.  
However, in the special case of two orbifold points with one point 
of order $2$, Lemma~\ref{lem:number_theory} can be avoided, and 
Theorem~\ref{thm:disk_orbifolds} still goes through.  
The proof of Theorem~\ref{thm:disk_orbifolds} is essentially 
a combinatorialization of the argument in~\cite{CL}.
\end{remark}

\subsection{Orbifolds with genus}

We now prove an analog of Theorem~\ref{thm:disk_orbifolds} 
in the case that the orbifold has genus at least $1$.  
In this case, we can avoid any number-theoretic issues.

\begin{theorem}\label{thm:genus_orbifolds}
Let $\SS$ be a hyperbolic orbifold with one boundary component and 
with genus at least $1$.  Let $\Gamma = \pi_1(\SS)$ with $b = \partial \SS \in \Gamma$,
and let $w \in \Gamma$ be hyperbolic so that some power of $w$ is homologically trivial.  
Then there exists $N\in \N$ so that 
for all $n\ge 0$, the loop representing $wb^{N+n}$ virtually bounds 
an immersed surface with geodesic boundary.
\end{theorem}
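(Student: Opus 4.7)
My plan is to follow the three-step strategy used for Theorem~\ref{thm:disk_orbifolds}, exploiting the flexibility provided by rectangles in the genus setting to avoid the number-theoretic lemma and obtain consecutive powers of $b$.

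In Step~1, I will construct a partial fatgraph $Y'$ whose boundary, with unglued polygon edges read as their generator labels, is $wb^m$ for some $m$. Each maximal run $W_k = c_j^{e_k}$ of finite-order letters in $w$ will be packaged into a group polygon exactly as in the disk case, and each infinite-order letter $z_i^{\pm 1}$ in $w$ will be packaged into a rectangle $r(z_i)$ whose appropriate labeled side contributes to $w$. These pieces will be connected using small polygons whose boundaries are intervals of $O_\SS$; on the opposite side of $w$, I will fill in standard $\SS$ polygons and, because $O_\SS$ and $b$ do not coincide in the genus case, also insert bridging rectangles pairing the $pe(z_i^{\pm 1})$ edges of adjacent standard $\SS$ polygons. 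The net effect should be that each standard $\SS$ polygon together with its bridging rectangles contributes exactly one copy of $b$ to the boundary word, giving the analog of Lemma~\ref{lem:boundary_with_unglued}: $\partial Y' = wb^m$.

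Step~2 will be the covering trick. Letting $L$ denote the least common multiple of the $o_j$, I will take $L$ copies of $Y'$ and glue $L/o_j$ group polygons within the fiber over each unglued $pe(c_j)$ edge (as in the disk case), and glue rectangles pairing the $L$ copies of each unglued $pe(z_i)$ edge with copies of some unglued $pe(z_i^{-1})$ edge. Such pairings will exist because the hypothesis that some power of $w$ is homologically trivial forces $w$ to be balanced in the $\Z^I$-abelianization, and $b$ is also balanced, so the total counts of unglued $pe(z_i)$ and $pe(z_i^{-1})$ edges in $Y'$ agree. The resulting complete fatgraph $Y$ will have only small polygons compatible with $O_\SS$, so Proposition~\ref{prop:immersion} gives that $f_Y$ is homotopic to an immersion with geodesic boundary covering $wb^m$.

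For Step~3, I will achieve every sufficiently large $m$ by increasing it one unit at a time. Starting from $Y'$ (first appending a copy of $b$ to $w$ if necessary to guarantee an unglued $pe(z_i)$ edge), I will attach a rectangle $r(z_i)$ to this unglued edge via its $re(z_i)$ short side and then attach a fresh standard $\SS$ polygon---together with bridging rectangles as in Step~1---to the rectangle's $re(z_i^{-1})$ side. This should add exactly one more copy of $b$ to the boundary, giving a new partial fatgraph with boundary $wb^{m+1}$; iterating and applying Step~2 to each will produce immersions covering $wb^{m+k}$ for every $k\ge 0$, completing the proof with $N = m$. The main technical obstacle will be the careful bookkeeping needed to verify the analog of Lemma~\ref{lem:boundary_with_unglued} in the genus case: since $O_\SS$ differs from $b$ as a cyclic word, arranging the bridging rectangles so that each standard $\SS$ polygon contributes a single copy of $b$ (not $O_\SS$) to the boundary word requires care, though this ultimately reduces to the observation that a fatgraph faithfully mirroring the structure of $\SS$ has boundary $b$.
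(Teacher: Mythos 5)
Your Steps 1 and 2 follow the same architecture as the paper: build a partial fatgraph $Y'$ with $w$ on top using group polygons, rectangles, and interval-of-$O_\SS$ polygons, pair off the unglued $pe(z_i^{\pm1})$ edges with rectangles attached to standard $\SS$ polygons, and finish with the covering trick plus Proposition~\ref{prop:immersion}. Two caveats there. First, the equality of the numbers of unglued $pe(z_i)$ and $pe(z_i^{-1})$ edges is exactly the paper's Lemma~\ref{lem:equidistributed_edges}; your one-line justification (``$w$ and $b$ are homologically balanced'') is in the right spirit but not immediate, since those unglued edges live on the connector and standard polygons rather than on the letters of $w$, and the paper proves the count by attaching half-modules $A_{i,\pm}$ and reading off homological triviality of the resulting boundary. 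Second, your picture that ``each standard $\SS$ polygon together with its bridging rectangles contributes exactly one copy of $b$'' only holds when every rectangle doubles back to the \emph{same} polygon; once a rectangle bridges two different polygons, that polygon's contribution to the boundary is split into the two arcs $z_ib_{i,+}z_i^{-1}$ and $z_i^{-1}b_{i,-}z_i$ lying on \emph{different} boundary arcs (the paper's modules $A_i$ have two boundary components), and the copies of $b$ in $wb^m$ are reassembled from such fragments. This is repairable but is where the real bookkeeping lives.

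The genuine gap is Step 3. Your increment move --- attach one rectangle $r(z_i)$ to an unglued $pe(z_i)$ and hang one new standard $\SS$ polygon (with its bridging rectangles) off the other short edge --- does not add one clean copy of $b$ to the boundary. Tracing it in the punctured-torus case with $b=z_0b_{0,+}z_0^{-1}b_{0,-}$: the new polygon, with its other rectangles doubled back and its $pe(z_0)$ edge left free, splices a word of the shape $z_0\,b_{0,+}\,pe(z_0)\,b_{0,-}\,z_0^{-1}$ into the boundary where the old $pe(z_0)$ sat; reading the new free edge as $z_0$, the resulting word contains $b_{0,+}z_0b_{0,-}$, which is not a subword of any power of $b$ (in $b$, $b_{0,+}$ is followed by $z_0^{-1}$). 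Chaining standard polygons by single rectangles carries a parity constraint: this is precisely why the paper's modules $A_{i,k}$ exist only for \emph{even} $k$ (the $r(z_{i'})$ rectangles must pair consecutive polygons $P_\ell,P_{\ell+1}$ two at a time), so local modifications change the exponent of $b$ only by even amounts. To realize an odd increment the paper passes to a double cover $X'$ of the fatgraph and attaches two copies of a cross-sheet module $B$ arranged so that one extra copy of $b$ lands in the same position on both sheets. Your proposal is missing this idea entirely, and without it you only obtain $wb^{N+2n}$, not all $wb^{N+n}$.
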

\begin{proof}
As with the disk orbifold proof, in Step $1$, we construct
a partial fatgraph with some unglued polygon edges.  
The proof becomes different in Step $2$:
we cannot use a covering trick to fill in unglued 
polygon edges for infinite-order generators.  Therefore, 
we exhibit small partial fatgraph modules which can be 
inserted to fill in these edges.  Then we use the 
covering trick to fill in all the finite-order edges.

After relabeling, we can assume that the cyclic order on 
the infinite order generators is such that the boundary 
has the standard form $[z_0, z_1]\cdots [z_{I-2}, z_{I-1}]$ 
(if there are finite-order generators, they are 
inserted within this cyclic boundary word).
Therefore, the cyclic order is 
$[z_0,z_1^{-1},z_0^{-1},z_1, \cdots, z_{I-2},z_{I-1}^{-1},z_{I-2}^{-1},z_{I-1}]$ 
(with finite-order generators inserted at appropriate positions).  Note that 
the cyclic order isn't the same as the boundary word.
It will be useful to be able to refer to the 
letters in $w$ and the boundary word $b$, but their lengths 
will not matter.  Therefore, we use $w_0$ and $w_{-1}$ 
to refer to the first and last letters in $w$, and similarly for $b$.

{\bf Step 1:}
Perform Step~1 as in the proof 
of Theorem~\ref{thm:disk_orbifolds} to get a partial 
cyclic fatgraph $Y'$ which has boundary $w$ 
along the top and many unglued polygon edges.  
There are two situations not covered by those instructions, as follows.
First, each infinite order generator $z_i$ is in a run $W_k$ 
by itself, and the fatgraph piece we use for this run is simply a rectangle.
Second, when building the polygon to be glued onto the 
far left, we use the polygon which is 
the interval in $O_\SS$ between $b_0$ and $w_{-1}$, and 
the polygon for the far right is the interval between $w_0$ and $b_{-1}$.
In the proof of Theorem~\ref{thm:disk_orbifolds}, we used
$c_0$ and $c_{J-1}$ in place of $b_0$ and $b_{-1}$ because 
in the disk orbifold case we know that the boundary 
word is exactly $c_0\cdots c_{J-1}$.
See Figure~\ref{fig:W_k_genus_glued_up} for an example of $Y'$. 

\begin{figure}[ht]
\labellist
\small\hair 2pt
 \pinlabel {$z_0$} at 259 71
 \pinlabel {$c_0$} at 203 73
 \pinlabel {$z_0^{-1}$} at 127 77
 \pinlabel {$c_0$} at 48 72
 
 \pinlabel {$z_0$} at 4 65
 \pinlabel {$z_1^{-1}$} at 5.5 46
 
 \pinlabel {$c_0$} at 27 39
 
 \pinlabel {$z_0^{-1}$} at 30 18
 \pinlabel {$z_1$} at 39 2
 \pinlabel {$z_0$} at 58 1
 \pinlabel {$z_1^{-1}$} at 68 18
 
 \pinlabel {$c_0$} at 65 38
 
 \pinlabel {$z_0^{-1}$} at 78 37
 \pinlabel {$z_1$} at 96 39
 
 \pinlabel {$z_0$} at 126 51
 
 \pinlabel {$z_1$} at 145 43
 \pinlabel {$z_0$} at 161 31
 \pinlabel {$z_1^{-1}$} at 177 42
 
 \pinlabel {$c_0$} at 188 42
 
 \pinlabel {$z_0^{-1}$} at 188 21
 \pinlabel {$z_1$} at 196 5
 \pinlabel {$z_0$} at 215 5
 \pinlabel {$z_1^{-1}$} at 224 21
 
 \pinlabel {$c_0$} at 221 42
 
 \pinlabel {$z_0^{-1}$} at 257 44
 
 \pinlabel {$z_1^{-1}$} at 286 42
 \pinlabel {$c_0$} at 304 50
 \pinlabel {$z_0^{-1}$} at 307 68
 \pinlabel {$z_1$} at 285 73
\endlabellist
\includegraphics[scale=1.15]{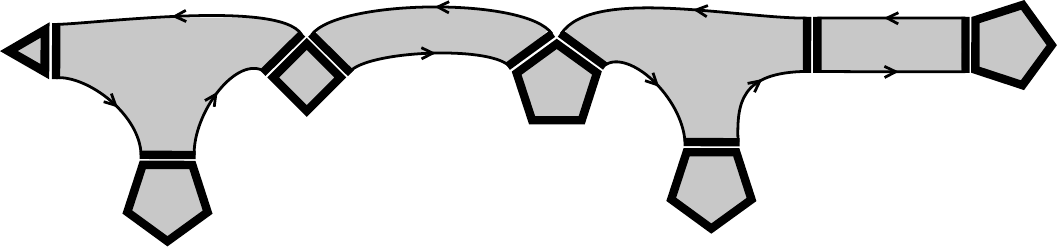}
\caption{The partial fatgraph $Y'$ for the word $w=z_0c_0z_0^{-1}c_0$ 
in an orbifold of genus $1$ with one orbifold point of order $3$, 
with cyclic order $[z_0, z_1^{-1}, c_0, z_0^{-1},z_1]$ and thus 
boundary $b=z_0z_1c_0z_0^{-1}z_1^{-1}$.  For 
simplicity, we denote the edges $pe(z_i)$ and $pe(c_j)$ by 
$z_i$ and $c_j$, respectively.  This picture omits the 
replacement of $w$ by $wb^2$, as described at the end of Step~$1$, 
because it is not necessary to do that to have enough unglued 
polygon edges in this example.}
\label{fig:W_k_genus_glued_up}
\end{figure}

For reasons which become apparent in Step~$3$, we need for there to
be sufficiently many unglued polygon edges.  Therefore, 
we assume that we have appended a copy of $b^2$ onto $w$.  
Since the polygon inserted between letters in $b$ is a 
complete standard $\SS$ polygon, and $b$ has length at least $4$, 
we ensure that there are at least four polygon edges $pe(z_i)$ and 
four polygon edges $pe(z_i^{-1})$ for each $i$.  
For simplicity, we will not show this in our example pictures.

{\bf Step 2:}

In this step, we describe how to fill in the 
unglued polygon edges associated with the 
infinite-order generators.  
We do this by building small partial fatgraph ``modules'' 
which can be glued in to complete $Y'$. 

It will be convenient to be able to refer to subwords of the boundary 
word $b$.  Denote by $b_{i,+}$ and $b_{i,-}$ the subword of the 
(cyclic) word $b$ between (not including) $z_i$ and $z_i^{-1}$, and 
$z_i^{-1}$ and $z_i$, 
respectively.  For example, if $b=z_0z_1c_0z_0^{-1}z_1^{-1}$, then 
$b_{0,+} = z_1c_0$ and $b_{0,-} = z_1^{-1}$.

Now let $P$ be the standard $\SS$ polygon, whose boundary is 
$O_\SS$.  For each infinite-order generator $z_i$, 
attach the rectangle $r(z_i)$ to both polygon edges 
$pe(z_i)$ and $pe(z_i^{1-})$.
Call this partial fatgraph $A$.  Note that if we read 
unglued finite-order polygon edges $pe(c_j)$ as $c_j$, 
then $\partial A = b$; i.e., the boundary of $A$ is 
the boundary word of $\SS$.  

For each $i$, define a partial fatgraph $A_i$ as follows.
Detach one edge of the rectangle $r(z_i)$ 
from the polygon $P$, and glue a duplicate copy of 
$r(z_i)$ to the polygon edge which is now unglued.
That is, instead of there being a single rectangle with both 
edges glued to $P$, there are now two rectangles, each of which is 
glued to one of the two edges $pe(z_i)$ and $pe(z_i^{-1})$ 
in $P$.   See Figure~\ref{fig:W_k_modules}.

\begin{figure}[htb]
\begin{center}
\labellist
\small\hair 2pt
 \pinlabel {$z_0$} at 88 49
 \pinlabel {$z_1$} at 32 69
 \pinlabel {$z_1^{-1}$} at 36 52
 \pinlabel {$z_0$} at 12 49
 \pinlabel {$re(z_0^{-1})$} at -13 37
 \pinlabel {$z_0^{-1}$} at 10 22
 \pinlabel {$c_0$} at 65 17
 \pinlabel {$z_0^{-1}$} at 88 22
 \pinlabel {$re(z_0)$} at 122 33
 
 \pinlabel {$z_1$} at 240 49
 \pinlabel {$c_0$} at 218 54
 \pinlabel {$z_0^{-1}$} at 163 62
 \pinlabel {$z_0$} at 189 48
 \pinlabel {$z_1$} at 164 48
 \pinlabel {$z_1^{-1}$} at 166 21
 \pinlabel {$z_1^{-1}$} at 240 22
 \pinlabel {$re(z_1)$} at 274 36
 \pinlabel {$re(z_1^{-1})$} at 139 41
\endlabellist
\includegraphics[scale=1.2]{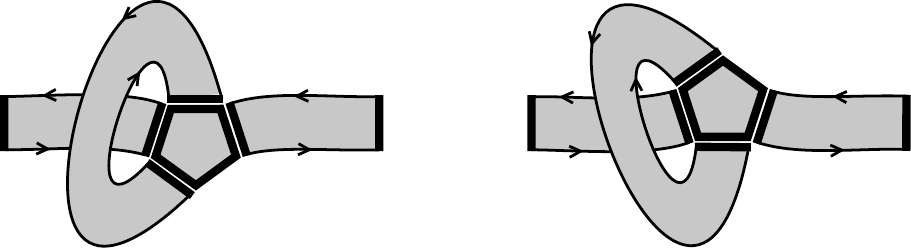}
\caption{The partial fatgraph modules $A_0$ and $A_1$ in the 
example from Figure~\ref{fig:W_k_genus_glued_up}.}
\label{fig:W_k_modules}
\end{center}
\end{figure}

\begin{lemma}\label{lem:fatgraph_modules}
The partial fatgraph $A_i$ has two boundary components.  
When finite-order polygon edges $pe(c_j)$ are read as $c_j$, 
the $1$-simplicies in the boundary are labeled 
$pe(z_i)z_ib_{i,+}z_i^{-1}$ and $pe(z_i^{-1})z_i^{-1}b_{i,-}z_i$.
Consequently, there 
are no infinite-order unglued polygon edges, and there are exactly 
the two unglued rectangle edges $re(z_i)$ and $re(z_i^{-1})$.
\end{lemma}
\begin{proof}
This is just by construction; Figure~\ref{fig:W_k_modules} 
illustrates it.
Suppose we build a fatgraph $B$ by attaching the 
bigon $[pe(z_i),pe(z_i^{-1})]$ 
to the two unglued rectangle edges in $A_i$.  Then 
we have effectively taken $A$ and replaced the rectangle $r(z_i)$ 
with two rectangles glued end-to-end.  Therefore, the boundary 
of $B$ is $b$ with $z_i$ and $z_i^{-1}$ duplicated.  So 
the boundary of $B$ is $z_i^2b_{i,+}z_i^{-2}b_{i,-}$.  Now remove the bigon to get $A_i$ 
back from $B$, which cuts the boundary in two at the $z_i^2$ and $z_i^{-2}$ 
and inserts the rectangle 
edges as claimed.
\end{proof}

The partial fatgraphs $A_i$ are the small modules which we will 
glue onto $Y'$ to fill in the unglued polygon edges.  Each $A_i$ 
has the two rectangle edges $re(z_i)$ and $re(z_i^{-1})$.  
So we must verify that $Y'$ contains the same number of 
$pe(z_i)$ as $pe(z_i^{-1})$ for each $i$.

\begin{lemma}\label{lem:equidistributed_edges}
For every $i$, $Y'$ contains the same number of 
unglued polygon edges $pe(z_i)$ and 
$pe(z_i^{-1})$.
\end{lemma}
\begin{proof}
The purpose of the this lemma is to verify that we 
can attach the partial fatgraphs $A_i$ to $Y'$ 
to fill in all the unglued polygon edges.  So it is interesting 
that to \emph{prove} this lemma, we will attach different partial 
fatgraphs to $Y'$ and then make some observations about the result.
It is also possible to prove the lemma with some technical 
combinatorial counting, but this method is more intuitive.

Note that the infinite-order generators naturally come in pairs, 
one for each genus.  For each $i$, let $i'$ denote 
the index with which $i$ is paired.  So $i' = i+1$ if $i\equiv 0 \mod 2$, 
and $i' = i-1$ if $i\equiv 1 \mod 2$.

Consider the partial fatgraph $A_i$.  It has two 
unglued rectangle edges $re(z_i)$ and $re(z_i^{-1})$.
Because the fundamental group of $A_i$ embeds in $\Gamma$, we will 
refer to elements of $\pi_1(A_i)$ by their images in $\Gamma$.
Because we have assumed the standard form for the generators, 
a loop freely homotopic to $z_{i'}$ in $A_i$ is separating, and 
the two boundary components of $A_i$ are in different connected 
components of the complement of the loop $z_{i'}$. 
See Figure~\ref{fig:small_modules}.

\begin{figure}[htb]
\begin{center}
\labellist
\small\hair 2pt
 \pinlabel {$z_0$} at 88 49
 \pinlabel {$z_1$} at 32 69
 \tiny
 \pinlabel {$z_1^{-1}$} at 46 46
 \small
 \pinlabel {$z_0$} at 12 49
 \pinlabel {$re(z_0^{-1})$} at -13 37
 \pinlabel {$z_0^{-1}$} at 10 22
 \pinlabel {$c_0$} at 65 17
 \pinlabel {$z_0^{-1}$} at 88 22
 \pinlabel {$re(z_0)$} at 122 33
 
 \pinlabel {$z_1$} at 240 49
 \pinlabel {$c_0$} at 218 54
 \pinlabel {$z_0^{-1}$} at 163 62
 \pinlabel {$z_0$} at 189 48
 \pinlabel {$z_1$} at 164 48
 \pinlabel {$z_1^{-1}$} at 166 21
 \pinlabel {$z_1^{-1}$} at 240 22
 \pinlabel {$re(z_1)$} at 274 36
 \pinlabel {$re(z_1^{-1})$} at 139 41
 
 \Large 
 \pinlabel {$A_{0,-}$} at 9 62
 \pinlabel {$A_{0,+}$} at 100 62
 \pinlabel {$A_{1,-}$} at 140 62
 \pinlabel {$A_{1,+}$} at 250 62
\endlabellist
\includegraphics[scale=1.2]{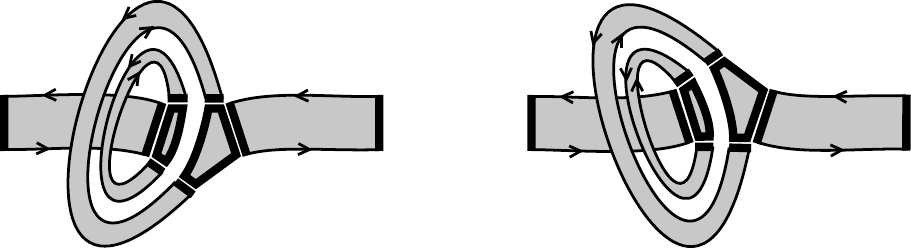}
\caption{The fatgraph modules $A_0$ and $A_1$ as in 
Figure~\ref{fig:W_k_modules}, get cut into the modules $A_{0,\pm}$ 
and $A_{1,\pm}$ in the proof of Lemma~\ref{lem:equidistributed_edges}.}
\label{fig:small_modules}
\end{center}
\end{figure}

Thus the loop $z_{i'}$ cuts $A_i$ into two surfaces, which we will 
refer to as $A_{i,+}$ and $A_{i,-}$.  The surface $A_{i,+}$ has two boundary 
components: the loops $re(z_i)z_ib_{i,+}z_i^{-1}$ and $z_{i'}^{\pm 1}$, and the 
surface $A_{i,-}$ has the two boundaries $re(z_i^{-1})z_i^{-1}b_{i,-}z_i$ and 
$z_{i'}^{\mp 1}$.  We use $z_{i'}^{\pm 1}$ and $z_{i'}^{\mp 1}$ to 
refer to the positive and negative powers of $z_{i'}$ because 
which power goes with $A_{i,+}$ and $A_{i,-}$ 
depends on the parity of $i$.  This doesn't matter; the 
key fact is that $A_{i,+}$ and $A_{i,-}$ each have a $z_{i'}$ boundary 
loop, and the loops on $A_{i,+}$ and $A_{i,-}$ have opposite signs.

So for each $i$, we have the partial fatgraphs $A_{i,+}$ and $A_{i,-}$, 
which have the single unglued rectangle edge $re(z_i)$ and 
$re(z_i^{-1})$, respectively.
Build a new fatgraph $X$ by attaching a copy of $A_{i,\pm}$ to 
every unglued infinite-order polygon edge $pe(z_i^{\pm 1})$ in $Y'$.
Each $A_{i,\pm}$ has only a single unglued rectangle edge, so there is 
no obstruction to attaching them to every unglued edge.
Because the $A_{i,\pm}$ have no unglued infinite-order polygon edges, 
there are no infinite-order unglued polygon edges in $X$, so 
$X$ contains only labeled 
sides of rectangles and unglued finite-order polygon edges.
And by the construction of $Y'$ (because every polygon in $Y'$ is 
an interval in $O_\SS$), if we read each finite-order edge $pe(c_j)$ 
as $c_j$, the boundary of $X$ is of the form
$wb^m + \sum_{i=0}^{I-1} M_i z_i + m_i z_i^{-1}$ for integers $m$, $M_i$, and $m_i$.
The $z_i^{\pm 1}$ boundary components arise from the small boundary loops in 
the $A_{i,\pm}$.  Because the only unglued edges in $X$ are finite-order 
edges, the boundary of $X$ has a finite power which is homologically 
trivial.  And since $w$ and $b$ both have finite powers which are 
homologically trivial, the sum $\sum_{i=0}^{I-1} M_i z_i + m_i z_i^{-1}$ 
must be homologically trivial, so it must be that $m_i = M_i$ for each $i$.
But these integers count the number of copies of 
$A_{i,+}$ and $A_{i,-}$ which we attached to $Y'$, so we 
conclude that the number of unglued edges 
$pe(z_i)$ is equal to the number of unglued edges $pe(z_i^{-1})$, 
as desired.
\end{proof}

By Lemma~\ref{lem:equidistributed_edges}, 
the partial fatgraph $Y'$ contains the same number of edges $pe(z_i)$ 
as $pe(z_i^{-1})$ for each $i$.  Call this number $m_i$.  
Therefore, it is possible to attach $m_i$ copies of $A_i$ to 
$Y'$ for each $i$.  Call the resulting fatgraph $Y''$.  
By construction, $Y''$ has a single boundary component, 
which, if $pe(c_j)$ is read as $c_j$, is $wb^m$ for some $m$.
By applying Step~$2$ of Theorem~\ref{thm:disk_orbifolds}, the 
covering trick, to $Y''$ we can produce a complete fatgraph $Y$ whose boundary 
covers $wb^m$.  By construction $Y$ contains only polygons 
whose boundaries are intervals of $O_\SS$, so they are 
small and compatible with $O_\SS$.  Therefore, the 
existence of $Y$ shows that $wb^m$ virtually bounds an immersed 
surface in $\SS$.  
Figure~\ref{fig:W_k_genus_glued_up_with_As} shows the result $Y''$ 
of attaching all the $A_i$ to the partial fatgraph $Y'$
shown in Figure~\ref{fig:W_k_genus_glued_up}.
This completes Step~$2$.

\begin{figure}[htb]
\begin{center}
\labellist
\small\hair 2pt
 \pinlabel {$z_0$} at 316 200
 \pinlabel {$c_0$} at 261 202
 \pinlabel {$z_0^{-1}$} at 185 206
 \pinlabel {$c_0$} at 108 200
 
 \pinlabel {$z_0$} at 10 190
 \pinlabel {$z_1$} at 16 96
 \pinlabel {$c_0$} at 39 156
 \pinlabel {$z_0^{-1}$} at 40 178
 \tiny \pinlabel {$z_1^{-1}$} at 49 164 \small
 
 \tiny \pinlabel {$z_0$} at 135 111 \small
 \pinlabel {$z_1$} at 81 160
 \pinlabel {$c_0$} at 89 165
 \pinlabel {$z_0^{-1}$} at 52 93
 \tiny \pinlabel {$z_1^{-1}$} at 15 36 \small
 
 \tiny \pinlabel {$z_0$} at 62 67 \small
 \pinlabel {$z_1$} at 101 9
 \pinlabel {$c_0$} at 134 13
 \pinlabel {$z_0^{-1}$} at 183 40
 \tiny \pinlabel {$z_1^{-1}$} at 106 36 \small
 
 \pinlabel {$z_0$} at 66 34
 \pinlabel {$z_1$} at 5 48
 \pinlabel {$c_0$} at 41 9
 \pinlabel {$z_0^{-1}$} at 71 5
 \tiny \pinlabel {$z_1^{-1}$} at 108 66 \small
 
 \tiny \pinlabel {$z_0$} at 159 43 \small
 \pinlabel {$z_1$} at 107 93
 \pinlabel {$c_0$} at 123 166
 \tiny \pinlabel {$z_0^{-1}$} at 95 144 \small
 \tiny \pinlabel {$z_1^{-1}$} at 33 110 \small
 
 \tiny \pinlabel {$z_0$} at 113 139 \small
 \pinlabel {$z_1$} at 150 154
 \pinlabel {$c_0$} at 143 143
 \tiny \pinlabel {$z_0^{-1}$} at 135 89 \small
 \tiny \pinlabel {$z_1^{-1}$} at 177 169 \small
 
 \pinlabel {$z_0$} at 184 178
 \pinlabel {$z_1$} at 168 124
 \pinlabel {$c_0$} at 193 88
 \pinlabel {$z_0^{-1}$} at 253 85
 \tiny \pinlabel {$z_1^{-1}$} at 195 132 \small
 
 \pinlabel {$z_0$} at 258 104
 \pinlabel {$z_1$} at 340 102
 \pinlabel {$c_0$} at 283 133
 \tiny \pinlabel {$z_0^{-1}$} at 259 134 \small
 \tiny \pinlabel {$z_1^{-1}$} at 242 159 \small
 
 \tiny \pinlabel {$z_0$} at 224 94 \small
 \pinlabel {$z_1$} at 252 148
 \pinlabel {$c_0$} at 248 169
 \pinlabel {$z_0^{-1}$} at 271 86
 \tiny \pinlabel {$z_1^{-1}$} at 225 24 \small
 
 \pinlabel {$z_0$} at 272 57
 \pinlabel {$z_1$} at 314 0
 \pinlabel {$c_0$} at 345 4
 \pinlabel {$z_0^{-1}$} at 396 33
 \tiny \pinlabel {$z_1^{-1}$} at 318 27 \small
 
 \pinlabel {$z_0$} at 277 24
 \pinlabel {$z_1$} at 214 35
 \pinlabel {$c_0$} at 254 0
 \pinlabel {$z_0^{-1}$} at 284 -3
 \tiny \pinlabel {$z_1^{-1}$} at 320 57 \small
 
 \pinlabel {$z_0$} at 370 36
 \pinlabel {$z_1$} at 321 83
 \pinlabel {$c_0$} at 280 170
 \pinlabel {$z_0^{-1}$} at 315 172
 \pinlabel {$z_1^{-1}$} at 367 104
 
 \pinlabel {$z_0$} at 411 154
 \pinlabel {$z_1$} at 367 132
 \pinlabel {$c_0$} at 364 178
 \tiny \pinlabel {$z_0^{-1}$} at 359 169 \small
 \tiny \pinlabel {$z_1^{-1}$} at 320 118.5 \small
 
 \pinlabel {$z_0$} at 371 148
 \pinlabel {$z_1$} at 370 212
 \pinlabel {$c_0$} at 396 182
 \pinlabel {$z_0^{-1}$} at 423 134
 \pinlabel {$z_1^{-1}$} at 379 234
\endlabellist
\includegraphics[scale=0.8]{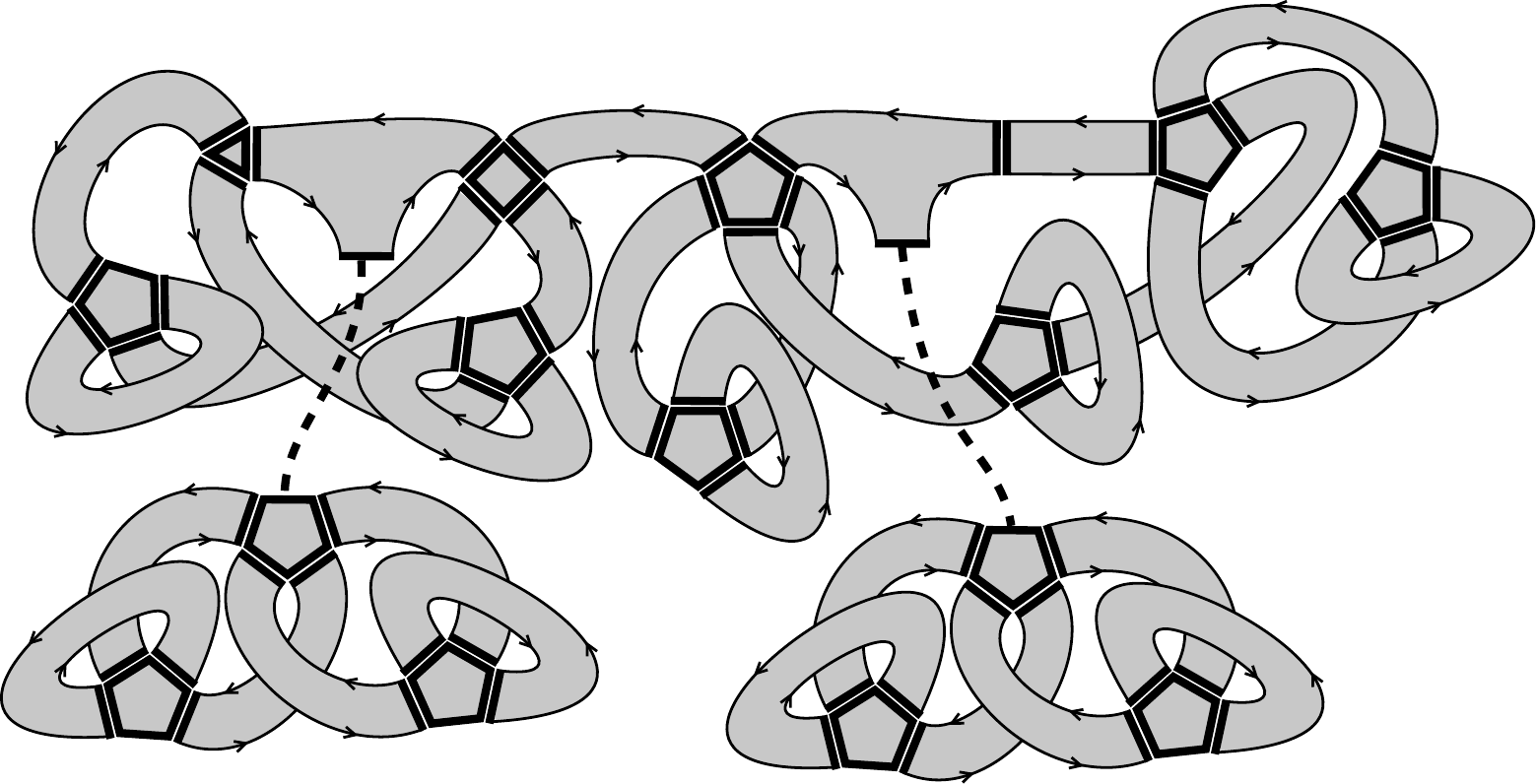}
\caption{The result of attaching the $A_i$ to 
the partial fatgraph in 
Figure~\ref{fig:W_k_genus_glued_up}.  For clarity, two 
regions of the fatgraph are drawn separately, and dotted 
lines indicate where they should be placed.  
Patience reveals that the 
boundary is, in fact, $wb^{14}$.}
\label{fig:W_k_genus_glued_up_with_As}
\end{center}
\end{figure}

{\bf Step 3:}

As with the proof of Theorem~\ref{thm:disk_orbifolds}, 
we have shown that $wb^m$ virtually bounds an immersed 
surface for some $m$, but we need to show the stability result 
that there is $N$ such that for all $n\ge 0$, 
$wb^{N+n}$ virtually bounds an immersed surface.

To prove this, we will construct some new partial fatgraph ``modules'' 
which can replace some of the $A_i$ 
and have the effect of increasing the power of $b$ in the boundary.
As it turns out, it is quite simple to increase the power of $b$ 
by \emph{two}, which would prove the theorem for \emph{even} $n$.
It is more complicated to increase the power by one; this requires 
taking a cover.

First, we show how to increase the power of $b$ by 
(a multiple of) two.
This requires exhibiting a new kind of partial fatgraph, 
which we will denote by $A_{i,k}$.  
Recall that $i'$ is the generator paired with $i$, so 
that $[z_i,z_{i'}]^{\pm 1}$ appears in $b$, possibly with finite-order 
generators inserted.  
Given $k$ even, build $A_{i,k}$ as follows 
(See Figure~\ref{fig:increase_by_2k}): 
take $k+1$ copies of the standard $\SS$ 
polygon, indexed by $P_\ell$ for $\ell = 0 \ldots k$.  
For every infinite-order generator $z_t$ 
with $t \ne i$ and $t \ne i'$, add $k+1$ copies of the 
rectangle $r(z_t)$, each one connected at both edges to a single 
polygon $P_\ell$.  Next, add $k+2$ copies of $r(z_i)$.  
One copy has rectangle edge $re(z_i)$ connected to $P_0$; 
one copy has rectangle edge $re(z_i^{-1})$ connected to $P_k$; and 
the remaining copies connect $P_\ell$ to $P_{\ell+1}$.
Finally, add $k+1$ copies of $r(z_{i'})$, as follows:
for each $\ell$ divisible by $2$, add two copies of $r(z_{i'})$ 
which connect $P_\ell$ to $P_{\ell+1}$.  
By construction, the boundary of $A_{i,k}$ has two components, 
with labels $re(z_i^{-1})z_i^{-1}b^kb_{i,-}z_i$ and 
$re(z_i)z_ib_{i,+}z_i^{-1}$.  Note the power $b^k$ in the 
first boundary component is the \emph{cyclic} word $b^k$; it may be 
cyclically rotated from the original choice of a cyclic representative 
that we called $b$.  This is correct, since if we insert a copy of 
$b$ in the middle of a power of $b$, we must cyclically shift the 
inserted copy, depending on the location it is inserted, 
so that it aligns correctly.
See Figure~\ref{fig:increase_by_2k}.

\begin{figure}[htb]
\begin{center}\labellist
\small\hair 2pt
 \pinlabel {$z_0^{-1}$} at 12 42
 \pinlabel {$z_0^{-1}$} at 109 32
 \pinlabel {$z_0^{-1}$} at 147.5 43
 \pinlabel {$z_0^{-1}$} at 220 44
 
 \pinlabel {$z_0$} at 217 68.5
 \pinlabel {$z_0$} at 145 68
 \pinlabel {$z_0$} at 113.5 66
 \pinlabel {$z_0$} at 13 68.5
 
 \pinlabel {$re(z_0^{-1})$} at -14 57
 \pinlabel {$c_0$} at 65 38
 \pinlabel {$c_0$} at 136 39
 \pinlabel {$c_0$} at 198 39
 \pinlabel {$re(z_0)$} at 255 56
 
 \pinlabel {$z_1^{-1}$} at 71 8
 \pinlabel {$z_1$} at 59 28
 \pinlabel {$z_1$} at 74 74
 \pinlabel {$z_1^{-1}$} at 78 97
 \tiny
 \pinlabel {$z_1^{-1}$} at 171 74
 \small
 \pinlabel {$z_1$} at 166 92
\endlabellist
\includegraphics[scale=1.25]{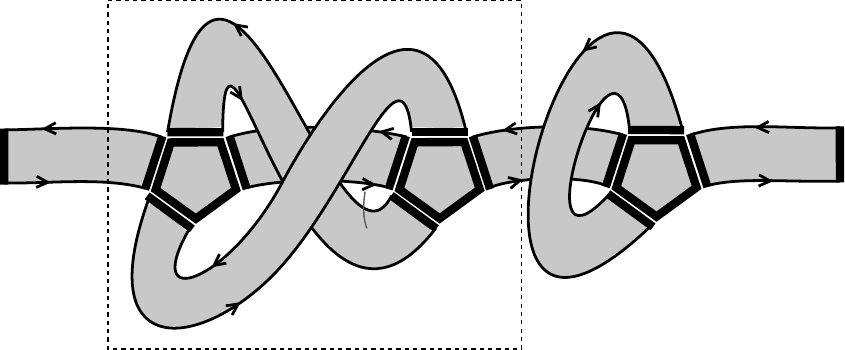}
\caption{The partial fatgraph $A_{0,2}$.  The boxed area 
is duplicated as desired to produce $A_{0,k}$.  
Note that replacing one of the $A_0$ with $A_{0,k}$ 
increases the power of $b$ in the boundary by $k$.}
\label{fig:increase_by_2k}
\end{center}
\end{figure}

Note that the boundary of $A_{i,k}$ is exactly that of $A_i$, 
except one of the boundaries has $k$ copies of $b$ inserted.  
Thus, if we replace one of the copies of $A_i$ for some $i$ with 
$A_{i,k}$ in Step~$2$, the resulting fatgraph $Y$ 
has boundary $wb^{m+k}$.  The fact that 
we can insert a copy of $A_{i,k}$ requires that we 
have at least two unglued polygon edges.  Recall we 
ensured this in Step~$1$.  This shows that $wb^{m+k}$ 
virtually bounds an immersed surface for any even $k$.

\begin{figure}[htb]
\begin{center}
\labellist
\small\hair 2pt
 \pinlabel {$e_1$} at -6 48
 \small
 \pinlabel {$z_0$} at 20 26
 \pinlabel {$z_1$} at 95 -3
 \pinlabel {$c_0$} at 158 31
 \pinlabel {$z_0^{-1}$} at 176 38
 \tiny
 \pinlabel {$z_1^{-1}$} at 203 69
 \small
 \pinlabel {$z_0$} at 173 66
 \pinlabel {$z_1$} at 111 104
 \pinlabel {$c_0$} at 45 56
 \pinlabel {$z_0^{-1}$} at 23 54
 
 \pinlabel {$e_3$} at 114 67
 \small
 \pinlabel {$z_0^{-1}$} at 86 69
 \pinlabel {$z_1^{-1}$} at 112 81
 \pinlabel {$z_0$} at 120 57.5
 \pinlabel {$e_3'$} at 98 35
 \small
 \pinlabel {$z_0^{-1}$} at 121 29
 \pinlabel {$z_1^{-1}$} at 97 21.5
 \pinlabel {$z_0$} at 89 43
 
 \pinlabel {$e_2$} at 279 52
 \small
 \pinlabel {$z_0$} at 251 63
 \pinlabel {$z_1$} at 196 87
 \pinlabel {$c_0$} at 228 32
 \pinlabel {$z_0^{-1}$} at 251 37.5

 \pinlabel {$e_1'$} at -6 165
 \small
 \pinlabel {$z_0$} at 20 143
 \pinlabel {$z_1$} at 95 114
 \pinlabel {$c_0$} at 158 148
 \pinlabel {$z_0^{-1}$} at 176 155
 \tiny
 \pinlabel {$z_1^{-1}$} at 203 186
 \small
 \pinlabel {$z_0$} at 173 183
 \pinlabel {$z_1$} at 111 221
 \pinlabel {$c_0$} at 45 173
 \pinlabel {$z_0^{-1}$} at 23 171
 
 \pinlabel {$e_4$} at 114 184
 \small
 \pinlabel {$z_0^{-1}$} at 86 186
 \pinlabel {$z_1^{-1}$} at 112 198
 \pinlabel {$z_0$} at 120 174.5
 \pinlabel {$e_4'$} at 98 152
 \small
 \pinlabel {$z_0^{-1}$} at 121 146
 \pinlabel {$z_1^{-1}$} at 97 138.5
 \pinlabel {$z_0$} at 89 160
 
 \pinlabel {$e_2'$} at 279 169
 \small
 \pinlabel {$z_0$} at 251 180
 \pinlabel {$z_1$} at 196 204
 \pinlabel {$c_0$} at 228 149
 \pinlabel {$z_0^{-1}$} at 251 154.5

\endlabellist
\includegraphics[scale=1.2]{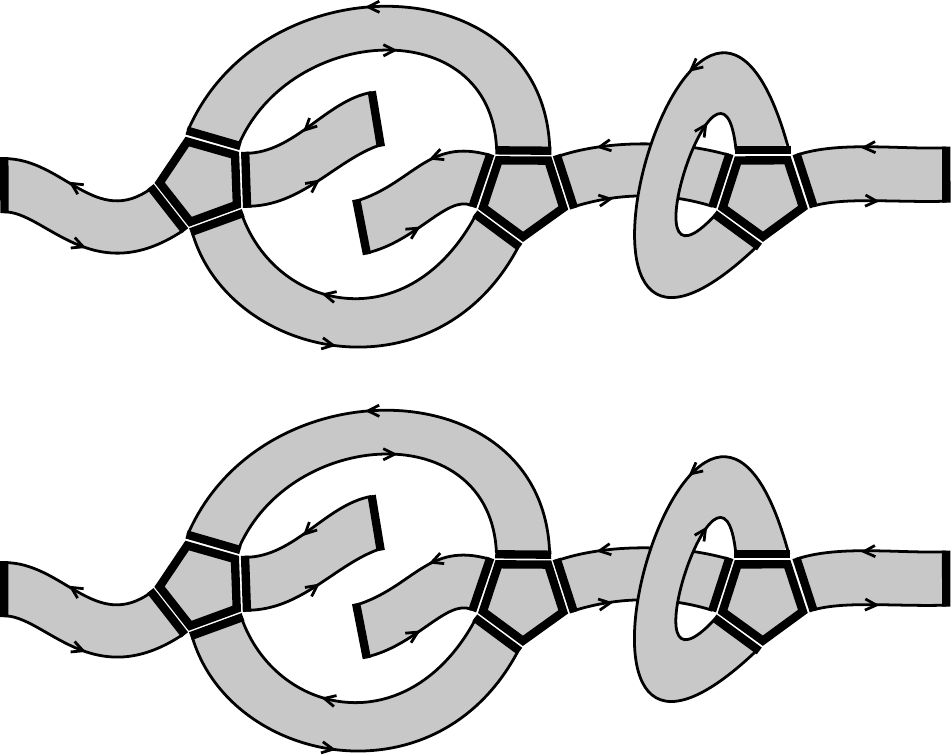}
\caption{Two copies of the partial fatgraph module $B$, 
with rectangle edges labeled as they should be attached 
to $X'$ in the proof of Theorem~\ref{thm:genus_orbifolds}.}
\label{fig:add_one}
\end{center}
\end{figure}

Finally, we show how to build a fatgraph with boundary $wb^{m+k+1}$ 
for any even $k$.  Consider again $Y'$, and recall that 
$Y'$ has at least four unglued polygon edges $pe(z_0)$ and 
$pe(z_0^{-1})$  (it has these unglued edges for every $i$; we choose 
$0$ arbitrarily).  Glue copies of $A_i$ to all unglued polygon edges 
for every index except $0$, and glue a copy of $A_0$ to one of the 
four pairs of unglued edges for index $0$, leaving three pairs.
Next, attach a copy of $A_{0,k}$ to one of the pairs, leaving two pairs.  
Call the resulting fatgraph $X$.
Note $X$ has exactly four unglued 
polygon edges: two $pe(z_0)$, which we denote by $e_1$ and $e_2$ 
and two $pe(z_0^{-1})$, which we denote by $e_3$ and $e_4$.
Let $X'$ be the partial fatgraph which is two copies of $X$, 
and think of $X'$ as a double cover of $X$.  Each $e_s$ 
has two edges covering it, which we denote by $e_s$ and $e_s'$.

We are going to attach two copies of a fatgraph module $B$ to the unglued 
edges in $X'$.  The module $B$ is similar to $A_{0,2}$, and is created from $A_{0,2}$
by removing the $r(z_0)$ rectangle between polygons $0$ and $1$ 
in the construction of $A_{0,2}$ and replacing it with two rectangles, 
one glued to polygon $0$ and one glued to polygon $1$.  This leaves 
four unglued rectangle edges.  It 
is far easier to understand by consulting Figure~\ref{fig:add_one}.  
Though this picture is for a specific example, $B$ 
in any other case is formed by just adding finite-order edges 
and genus loop pairs at locations on the standard polygons; 
it doesn't actually change the form of the module.

Take two copies of $B$, as shown in Figure~\ref{fig:add_one}, 
and attach the edges to $X'$ as labeled to produce a fatgraph $X''$.
Note $X''$ has no unglued infinite-order polygon edges, 
and reading $pe(c_j)$ as $c_j$, we find that the 
boundary of $X''$ is two copies of $wb^{m+k+1}$.  

\begin{figure}[htb]
\begin{center}
\labellist
\small\hair 2pt
 \pinlabel {$e_1$} [ ] at 21 29
 \pinlabel {$e_3$} [ ] at 53 18
 \pinlabel {$e_4$} [ ] at 77 18
 \pinlabel {$e_2$} [ ] at 101 30
 
 \pinlabel {$e_1'$} [ ] at 21 62
 \pinlabel {$e_3'$} [ ] at 53 62
 \pinlabel {$e_4'$} [ ] at 76 62
 \pinlabel {$e_2'$} [ ] at 101 62
\endlabellist
\includegraphics[scale=1.6]{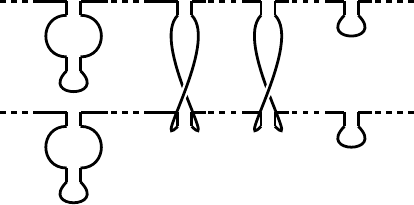}
\caption{A schematic showing the boundary of $X''$ after 
attaching two copies of $B$ to $X'$.  The two levels 
indicate the two sheets of $X'$ as a cover of $X$.
This picture shows why this particular way of attaching the 
edges adds one copy of $b$ in the same place on both boundaries 
of the cover.}
\label{fig:cover_schematic}
\end{center}
\end{figure}

Figure~\ref{fig:cover_schematic} shows a schematic of how the boundary 
behaves after attaching the two copies of $B$.
The exact arrangement of edges used in attaching $B$ to $X'$ is important: 
we need a fatgraph whose boundary is two copies of $wb^{m+k+1}$.
Were we to attach differently, we would have a fatgraph whose 
boundary contained two copies of $w$ and many copies of $b$, but the 
powers of $b$ in between the $w$ might not be the same.
Attaching as instructed places the extra copy of $b$ in the 
same place on both sheets of the cover $X'$.

Now performing the covering trick on $X''$ produces a 
fatgraph $Y$ whose boundary covers $wb^{m+k+1}$, and by construction 
$Y$ satisfies Proposition~\ref{prop:immersion}.  We have now 
shown that $wb^{m+k}$ and $wb^{m+k+1}$ virtually bound 
immersed surfaces for every even $k$, so 
this completes the proof.
\end{proof}

\begin{remark}
Theorem~\ref{thm:genus_orbifolds} applies in the case of a 
hyperbolic surface with a single boundary (and no orbifold points), 
so it resolves \cite{C_faces}, Conjecture~3.16.
\end{remark}

\begin{remark}
When a loop $\gamma$ virtually bounds an immersed surface, it means there 
is an immersed fatgraph with geodesic boundary whose boundary covers $\gamma$ 
with some degree, which we call the \emph{covering degree} of $\gamma$.  
The proofs of Theorems~\ref{thm:disk_orbifolds} 
and~\ref{thm:genus_orbifolds} show that the covering degree of $wb^n$ 
depends on the orders of the finite-order generators and is 
independent of $w$ and $n$.  In particular, if there are no orbifold 
points, then the covering degree is either $1$ or $2$ depending on the 
parity of $n$.
\end{remark}

\end{document}